\documentclass[12pt,a4paper]{article}
\usepackage{amsmath}
\usepackage{amsfonts}
\usepackage{amsthm}
\usepackage{amssymb}

\usepackage{enumerate}
%
%
%
%
\theoremstyle{plain}
\newtheorem{theo}{Theorem}[section]
\newtheorem{prop}[theo]{Proposition}
\newtheorem{cor}[theo]{Corollary}%

%
%
\theoremstyle{definition}
\newtheorem{definition}[theo]{Definition}
%
%
\theoremstyle{remark}
\newtheorem{rem}[theo]{Remark}
%
%

%
%
\numberwithin{equation}{section}

%
%
%
%
%
%
\newcommand{\C}{\mathbb{C}}

\newcommand{\R}{\mathbb{R}}
\newcommand{\N}{\mathbb{N}}
\newcommand{\M}{\mathbb{M}}
\newcommand{\divrg}{\textrm{div}\,}

%
%
%
%
%

\title{A generalized Korn inequality and strong unique continuation for the Reissner-Mindlin plate system
\thanks{The second author is supported by FRA2014 `Problemi inversi per PDE, unicit\`a, stabilit\`a, algoritmi', Universit\`a degli Studi di Trieste, the second and the third author are supported by GNAMPA of the
Istituto Nazionale di Alta Matematica (INdAM)}}
\author{Antonino Morassi\thanks{Dipartimento Politecnico di Ingegneria e Architettura,
Universit\`a degli Studi di Udine, via Cotonificio 114, 33100
Udine, Italy. E-mail: \textsf{antonino.morassi@uniud.it}}, \  Edi
Rosset\thanks{Dipartimento di Matematica e Geoscienze,
Universit\`a degli Studi di Trieste, via Valerio 12/1, 34127
Trieste, Italy. E-mail: \textsf{rossedi@units.it}} \ and Sergio
Vessella\thanks{Dipartimento di Matematica e Informatica ``Ulisse
Dini'', Universit\`a degli Studi di Firenze, Viale Morgagni 67/a,
50134 Firenze, Italy. E-mail: \textsf{sergio.vessella@unifi.it}}}

\date{}

\begin{document}
\maketitle

\begin{abstract}
We prove constructive estimates for elastic plates modelled by the
Reissner-Mindlin theory and made by general anisotropic material.
Namely, we obtain a generalized Korn inequality which allows to
derive quantitative stability and global $H^2$ regularity for the
Neumann problem. Moreover, in case of isotropic material, we
derive an interior three spheres inequality with optimal exponent from which the strong unique continuation property follows.
\end{abstract}

\noindent \textbf{Mathematical Subject Classifications (2010):}
35J57, 74K20, 35B60.

\medskip

\medskip

\noindent \textbf{Key words:} elastic plates, Korn inequalities,
quantitative unique continuation, regularity.

\section{Introduction}
\label{sec:intro}

In the present paper we consider elastic plates modelled by the
Reissner-Mindlin theory. This theory was developed for moderately
thick plates, that is for plates whose thickness is of the order
of one tenth of the planar dimensions of the middle surface
\cite{Rei45}, \cite{Min51}. Our aim is to give a rigorous,
thorough and self-contained presentation of mathematical results
concerning the Neumann problem, a boundary value problem which
poses interesting features which, at our knowledge, have not yet
been pointed out in the literature.

Throughout the paper we consider an elastic plate $\Omega \times
\left [ - \frac{h}{2}, \frac{h}{2} \right ]$, where $\Omega
\subset \R^2$ is the middle surface and $h$ is the constant
thickness of the plate. A transversal force field $\overline{Q}$
and a couple field $\overline{M}$ are applied at the boundary of
the plate. According to the Reissner-Mindlin model, at any point
$x=(x_1,x_2)$ of $\Omega$ we denote by $w=w(x)$ and
$\omega_\alpha(x)$, $\alpha=1,2$, the infinitesimal transversal
displacement at $x$ and the infinitesimal rigid rotation of the
transversal material fiber thorugh $x$, respectively. Therefore,
the pair $(\varphi,w)$, with
$(\varphi_1=\omega_2,\varphi_2=-\omega_1)$, satisfies the
following Neumann boundary value problem

\begin{center}
\( {\displaystyle \left\{
\begin{array}{lr}
     \mathrm{\divrg}(S(\varphi+\nabla w))=0
      & \mathrm{in}\ \Omega,
        \vspace{0.25em}\\
      \mathrm{\divrg}({\mathbb P}\nabla \varphi)-S(\varphi+\nabla w)=0, & \mathrm{in}\ \Omega,
          \vspace{0.25em}\\
      (S(\varphi+\nabla w))\cdot n= \overline{Q},
      & \mathrm{on}\ \partial \Omega,
        \vspace{0.25em}\\
      ({\mathbb P}\nabla \varphi) n = \overline{M}, &\mathrm{on}\ \partial
      \Omega,
          \vspace{0.25em}\\
\end{array}
\right. } \) \vskip -7.5em
\begin{eqnarray}
& & \label{eq:intro-1}\\
& & \label{eq:intro-2}\\
& & \label{eq:intro-3}\\
& & \label{eq:intro-4}
\end{eqnarray}
\end{center}
where ${\mathbb P}$ and $S$  are the fourth-order bending tensor
and the shearing matrix of the plate, respectively. The vector $n$
denotes the outer unit normal to $\Omega$.

The weak formulation of \eqref{eq:intro-1}--\eqref{eq:intro-4}
consists in determining $(\varphi,w)\in H^1(\Omega, \R^2)\times
H^1(\Omega)$ satisfying
\begin{equation}
  \label{eq:var_for}
   a((\varphi,w), (\psi,v)) =\int_{\partial\Omega}\overline{Q} v + \overline{M}\cdot \psi, \quad  \forall \psi\in H^1(\Omega, \R^2), \forall v\in H^1(\Omega),
\end{equation}
where
\begin{equation}
  \label{eq:bil_for}
    a((\varphi,w), (\psi,v)) = \int_\Omega {\mathbb P}\nabla \varphi\cdot \nabla \psi + \int_\Omega S(\varphi+\nabla w)\cdot
        (\psi+\nabla v).
\end{equation}
The coercivity of the bilinear form $a(\cdot,\cdot)$ in the
subspace
\[{\mathcal H} =\left\{(\psi,v)\in H^1(\Omega,\R^2)\times H^1(\Omega)\ |\ \int_\Omega \psi =0, \int_\Omega v =0\right\}\]
with respect to the norm induced by $H^1(\Omega, \R^2)\times H^1(\Omega)$ is not standard. To prove this property -- in other terms,
the equivalence of the standard norm in ${\mathcal H}$ with the norm induced by the energy functional --
we derive the following generalized Korn-type inequality
\begin{equation}
  \label{eq:gener_korn}
  \|\nabla \varphi\|_{L^2(\Omega)}\leq C\left(\|\widehat{\nabla} \varphi\|_{L^2(\Omega)}+\|\varphi+\nabla w\|_{L^2(\Omega)}\right), \forall \varphi\in H^1(\Omega,\R^2), \forall w\in H^1(\Omega,\R),
\end{equation}
where $\widehat \nabla$ denotes the symmetric part of the gradient
and the constant $C$ is constructively determined in terms of the
parameters describing the geometrical properties of the Lipschitz
domain $\Omega$. Inequality \eqref{eq:gener_korn} allows to solve
the Neumann problem and provides a quantitative stability estimate
in the $H^1$ norm.

Assuming Lipschitz continuous coefficients and $C^{1,1}$
regularity of the boundary, we prove global $H^2$ regularity
estimates. For the proof, which is mainly based on the regularity
theory developed by Agmon \cite{Ag65} and Campanato \cite{Ca80}, a
key role is played by quantitative Poincar\'e inequalities for
functions vanishing on a portion of the boundary, derived in
\cite{A-M-R08}.

Finally, in case of isotropic material, we adapt arguments in
\cite{LNW2010} to $H^2$ solutions of the plate system
\eqref{eq:intro-1}--\eqref{eq:intro-2}, obtaining a three spheres
inequality with optimal exponent and, as a standard consequence, we derive the strong unique continuation property.

Let us notice that the constructive character of all the
estimates derived in the present paper is crucial for possible
applications to inverse problems associated to the Neumann problem
\eqref{eq:intro-1}--\eqref{eq:intro-4}. As a future direction of
research, we plan to use such results to treat inverse problems
concerning the determination of defects, such as elastic
inclusions, in isotropic elastic plates modelled by the
Reissner-Mindlin model.

The paper is organized as follows. In section \ref{sec:notation}
we collect the notation and in section \ref{sec:model} we present
a self-contained derivation of the mechanical model for general
anisotropic material. Section \ref{sec:korn} contains the proof of
the generalized Korn-type inequality \eqref{eq:gener_korn}, which
is the key ingredient used in section \ref{sec:direct} to study
the Neumann problem. In section \ref{sec:reg} we derive $H^2$
global regularity estimates. In Section \ref{sec:UC} we state and
prove the three spheres inequality. Finally, section
\ref{sec:appendix} is an Appendix where we have postponed some
technical estimates about regularity up to the boundary.

\section{Notation} \label{sec:notation}

Let $P=(x_1(P), x_2(P))$ be a point of $\R^2$.
We shall denote by $B_r(P)$ the disk in $\R^2$ of radius $r$ and
center $P$ and by $R_{a,b}(P)$ the rectangle
$R_{a,b}(P)=\{x=(x_1,x_2)\ |\ |x_1-x_1(P)|<a,\ |x_2-x_2(P)|<b \}$. To simplify the notation,
we shall denote $B_r=B_r(O)$, $R_{a,b}=R_{a,b}(O)$.

\begin{definition}
  \label{def:2.1} (${C}^{k,1}$ regularity)
Let $\Omega$ be a bounded domain in ${\R}^{2}$. Given $k\in\N$, we say that a portion $S$ of
$\partial \Omega$ is of \textit{class ${C}^{k,1}$ with
constants $\rho_{0}$, $M_{0}>0$}, if, for any $P \in S$, there
exists a rigid transformation of coordinates under which we have
$P=0$ and
\begin{equation*}
  \Omega \cap R_{\frac{\rho_0}{M_0},\rho_0}=\{x=(x_1,x_2) \in R_{\frac{\rho_0}{M_0},\rho_0}\quad | \quad
x_{2}>\psi(x_1)
  \},
\end{equation*}
where $\psi$ is a ${C}^{k,1}$ function on
$\left(-\frac{\rho_0}{M_0},\frac{\rho_0}{M_0}\right)$ satisfying
\begin{equation*}
\psi(0)=0, \quad \psi' (0)=0, \quad \hbox {when } k \geq 1,
\end{equation*}
\begin{equation*}
\|\psi\|_{{C}^{k,1}\left(-\frac{\rho_0}{M_0},\frac{\rho_0}{M_0}\right)} \leq M_{0}\rho_{0}.
\end{equation*}

\medskip
\noindent When $k=0$ we also say that $S$ is of
\textit{Lipschitz class with constants $\rho_{0}$, $M_{0}$}.
\end{definition}
\begin{rem}
  \label{rem:2.1}
  We use the convention to normalize all norms in such a way that their
  terms are dimensionally homogeneous with the $L^\infty$ norm and coincide with the
  standard definition when the dimensional parameter equals one, see \cite{MRV07} for details.

\end{rem}

Given a bounded domain $\Omega$ in $\R^2$ such that $\partial
\Omega$ is of class $C^{k,1}$, with $k\geq 1$, we consider as
positive the orientation of the boundary induced by the outer unit
normal $n$ in the following sense. Given a point
$P\in\partial\Omega$, let us denote by $\tau=\tau(P)$ the unit
tangent at the boundary in $P$ obtained by applying to $n$ a
counterclockwise rotation of angle $\frac{\pi}{2}$, that is
$\tau=e_3 \times n$,
where $\times$ denotes the vector product in $\R^3$, $\{e_1,
e_2\}$ is the canonical basis in $\R^2$ and $e_3=e_1 \times e_2$.
%
%
%

We denote by $\mathbb{M}^2$ the space of $2 \times 2$ real valued
matrices and by ${\mathcal L} (X, Y)$ the space of bounded linear
operators between Banach spaces $X$ and $Y$.

For every $2 \times 2$ matrices $A$, $B$ and for every $\mathbb{L}
\in{\mathcal L} ({\mathbb{M}}^{2}, {\mathbb{M}}^{2})$, we use the
following notation:
\begin{equation}
  \label{eq:2.notation_1}
  ({\mathbb{L}}A)_{ij} = L_{ijkl}A_{kl},
\end{equation}
\begin{equation}
  \label{eq:2.notation_2}
  A \cdot B = A_{ij}B_{ij}, \quad |A|= (A \cdot A)^{\frac {1} {2}}.
\end{equation}
Notice that here and in the sequel summation over repeated indexes
is implied.
%

\section{The Reissner-Mindlin plate model} \label{sec:model}

The Reissner-Mindlin plate is a classical model for plates having
moderate thickness \cite{Rei45}, \cite{Min51}. The
Reissner-Mindlin plate theory can be rigorously deduced {}from the
three-dimensional linear elasticity using arguments of
$\Gamma$-convergence of the energy functional, as it was shown in
\cite{P-PPG-T-07}. Our aim in this section is more modest, namely,
we simply derive the boundary value problem governing the statical
equilibrium of an elastic Reissner-Mindlin plate under Neumann
boundary conditions following the engineering approach of the
Theory of Structures. This allows us to introduce some notations
useful in the sequel and to make the presentation of the physical
problem complete.

Let us consider a plate $\Omega \times \left [ - \frac{h}{2},
\frac{h}{2} \right ]$ with middle surface represented by a bounded
domain $\Omega$ in $\R^2$ having uniform thickness $h$ and
boundary $\partial \Omega$ of class $C^{1,1}$. In this section we
adopt the convention that Greek indexes assume the values $1,2$,
whereas Latin indexes run {}from $1$ to $3$.

We follow the direct approach to define the infinitesimal
deformation of the plate. In particular, we restrict ourselves to
the case in which the points $x=(x_1,x_2)$ of the middle surface
$\Omega$ are subject to transversal displacement $w(x_1,x_2)e_3$,
and any transversal material fiber $\{x\}\times \left [ -
\frac{h}{2}, \frac{h}{2} \right ]$, $x\in \Omega$, undergoes an
infinitesimal rigid rotation $\omega(x)$, with $\omega(x)\cdot e_3
=0$. In this section we shall be concerned exclusively with
regular functions on their domain of definition. The above
kinematical assumptions imply that the displacement field present
in the plate is given by the following three-dimensional vector
field:
\begin{equation}
  \label{eq:anto-1.2}
  u(x,x_3)=w(x)e_3 + x_3 \varphi (x), \quad x\in
  \overline{\Omega}, \ |x_3| \leq \frac{h}{2},
\end{equation}
where
\begin{equation}
  \label{eq:anto-1.3}
  \varphi (x) = \omega (x) \times e_3, \quad x\in
  \overline{\Omega}.
\end{equation}
By \eqref{eq:anto-1.2} and \eqref{eq:anto-1.3}, the associated
infinitesimal strain tensor $E[u]\in \M^3$ takes the form
\begin{equation}
  \label{eq:anto-2.3}
  E[u](x,x_3) \equiv (\nabla u)^{sym}(x,x_3)= x_3 (\nabla_x
  \varphi(x)  )^{sym} + (\gamma(x) \otimes e_3)^{sym},
\end{equation}
where $\nabla_x (\cdot)=  \frac{\partial}{\partial x_\alpha}
(\cdot ) e_\alpha $ is the surface gradient operator,
$\nabla^{sym}(\cdot)= \frac{1}{2} ( \nabla (\cdot) + \nabla^T
(\cdot))$, and
\begin{equation}
  \label{eq:anto-2.4}
  \gamma(x)=\varphi(x) + \nabla_x w(x).
\end{equation}
Within the approximation of the theory of infinitesimal
deformations, $\gamma$ expresses the angular deviation between the
transversal material fiber at $x$ and the normal direction to the
deformed middle surface of the plate at $x$.

The classical deduction of the mechanical model of a thin plate
follows essentially {}from integration over the thickness of the
corresponding three-dimensional quantities. In particular, taking
advantage of the infinitesimal deformation assumption, we can
refer the independent variables to the initial undeformed
configuration of the plate.

Let us introduce an arbitrary portion $\Omega' \times \left [ -
\frac{h}{2}, \frac{h}{2} \right ]$ of plate, where $\Omega'
\subset \subset \Omega$ is a subdomain of $\Omega$ with regular
boundary. Consider the material fiber $\{x\}\times \left [ -
\frac{h}{2}, \frac{h}{2} \right ]$ for $x\in \partial \Omega'$ and
denote by $t(x,x_3,e_\alpha)\in \R^3$, $|x_3| \leq \frac{h}{2}$,
the \textit{traction vector} acting on a plane containing the
direction of the fiber and orthogonal to the direction $e_\alpha$.
By Cauchy's Lemma we have $t(x,x_3,e_\alpha)=T(x,x_3)e_\alpha$,
where $T(x,x_3) \in \M^{3}$ is the (symmetric) Cauchy stress
tensor at the point $(x,x_3)$. Denote by $n$ the unit outer normal
vector to $\partial \Omega'$ such that $n\cdot e_3=0$. To simplify
the notation, it is convenient to consider $n$ as a
two-dimensional vector belonging to the plane $x_3=0$ containing
the middle surface $\Omega$ of the plate. By the classical Stress
Principle for plates, we postulate that the two complementary
parts $\Omega'$ and $\Omega \setminus \Omega'$ interact with one
another through a field of force vectors $R=R(x,n)\in \R^3$ and
couple vectors $M=M(x,n) \in \R^3$ assigned per unit length at $x
\in
\partial \Omega'$. Denoting by
\begin{equation}
  \label{eq:anto-4.1}
  R(x,e_\alpha) = \int_{-h/2}^{h/2} t(x,x_3, e_\alpha) dx_3
\end{equation}
the force vector (per unit length) acting on a direction
orthogonal to $e_\alpha$ and passing through $x \in
\partial \Omega'$, the contact force $R(x,n)$ can be expressed as
\begin{equation}
  \label{eq:anto-4.2}
  R(x,n) = T^\Omega (x) n, \quad x \in \partial \Omega',
\end{equation}
where the \textit{surface force tensor} $T^\Omega (x) \in
\M^{3\times 2}$ is given by
\begin{equation}
  \label{eq:anto-4.3}
  T^\Omega (x) = R(x,e_\alpha) \otimes e_\alpha, \quad \hbox{in }
  \Omega.
\end{equation}
Let $P=I-e_3 \otimes e_3$ be the projection of $\R^3$ along the
direction $e_3$. $T^{\Omega}$ is decomposed additively by $P$ in
its \textit{membranal} and \textit{shearing} component
\begin{equation}
  \label{eq:anto-4.4}
  T^\Omega  = PT^\Omega + (I-P)T^\Omega \equiv T^{\Omega(m)}+
  T^{\Omega(s)},
\end{equation}
where, following the standard nomenclature in plate theory, the
components $T_{\alpha \beta}^{\Omega(m)}$ ($=T_{\beta
\alpha}^{\Omega(m)}$), $\alpha, \beta =1,2$, are called the
\textit{membrane forces} and the components $T_{3
\beta}^{\Omega(s)}$, $\beta=1,2$, are the \textit{shear forces}
(also denoted as $T_{3 \beta}^{\Omega(s)} = Q_\beta$). The
assumption of infinitesimal deformations and the hypothesis of
vanishing in-plane displacements of the middle surface of the
plate allow us to take
\begin{equation}
  \label{eq:anto-5.1}
  T^{\Omega(m)} = 0, \quad \hbox{in } \Omega.
\end{equation}
Denote by
\begin{equation}
  \label{eq:anto-5.2}
  M(x,e_\alpha) = \int_{-h/2}^{h/2} x_3 e_3 \times t (x,x_3,e_\alpha)
  dx_3, \quad \alpha=1,2,
\end{equation}
the contact couple acting at $x \in \partial \Omega'$ on a
direction orthogonal to $e_\alpha$ passing through $x$. Note that
$M(x,e_\alpha) \cdot e_3=0$ by definition, that is $M(x,e_\alpha)$
actually is a two-dimensional couple field belonging to the middle
plane of the plate. Analogously to \eqref{eq:anto-4.2}, we have
\begin{equation}
  \label{eq:anto-5.3}
  M(x,n) = M^\Omega (x) n, \quad x \in \partial \Omega',
\end{equation}
where the \textit{surface couple tensor} $M^\Omega (x) \in
\M^{3\times 2}$ has the expression
\begin{equation}
  \label{eq:anto-surface_couple_tensor}
  M^\Omega (x) = M(x,e_\alpha) \otimes e_\alpha.
\end{equation}
A direct calculation shows that
\begin{equation}
  \label{eq:anto-6.1}
  M(x,e_\alpha) = e_3 \times e_\beta M_{\beta \alpha}(x),
\end{equation}
where
\begin{equation}
  \label{eq:anto-6.2}
  M_{\beta \alpha}(x)=\int_{-h/2}^{h/2}x_3 T_{\beta
  \alpha}(x,x_3)dx_3, \quad \alpha, \beta =1,2,
\end{equation}
are the \textit{bending moments} (for $\alpha=\beta$) and the
\textit{twisting moments} (for $\alpha \neq \beta$) of the plate
at $x$ (per unit length).

Denote by $q(x)e_3$ the external transversal force per unit area
acting in $\Omega$. The statical equilibrium of the plate is
satisfied if and only if the following two equations are
simultaneously satisfied:
\begin{center}
\( {\displaystyle \left\{
\begin{array}{lr}
    \int_{\partial \Omega'} T^{\Omega}n ds + \int_{\Omega'}qe_3
    dx=0,
        \vspace{0.25em}\\
    \int_{\partial \Omega'} \left ( (x-x_0) \times T^{\Omega}n
    + M^{\Omega}n \right ) ds + \int_{\Omega'} (x-x_0)\times qe_3 dx=0,
          \vspace{0.25em}\\
\end{array}
\right. } \) \vskip -4.5em
\begin{eqnarray}
& & \label{eq:anto-7.1}\\
& & \label{eq:anto-7.2}
\end{eqnarray}
\end{center}
for every subdomain $\Omega' \subseteq \Omega$, where $x_0$ is a
fixed point. By applying the Divergence Theorem in $\Omega'$ and
by the arbitrariness of $\Omega'$ we deduce
\begin{center}
\( {\displaystyle \left\{
\begin{array}{lr}
    {\rm div}_x T^{\Omega(s)} + qe_3=0,
    & \mathrm{in}\ \Omega,
          \vspace{0.25em}\\
    {\rm div}_x M^{\Omega} + (T^{\Omega(s)})^Te_3 \times e_3=0,
    & \mathrm{in}\ \Omega.
          \vspace{0.25em}\\
\end{array}
\right. } \) \vskip -4.5em
\begin{eqnarray}
& & \label{eq:anto-7.3}\\
& & \label{eq:anto-7.4}
\end{eqnarray}
\end{center}
Consider the case in which the boundary of the plate $\partial
\Omega$ is subjected simultaneously to a couple field
$\overline{M}^*$, $\overline{M}^*\cdot e_3=0$, and a transversal
force field $\overline{Q}e_3$. Local equilibrium considerations on
points of $\partial \Omega$ yield the following boundary
conditions:
\begin{center}
\( {\displaystyle \left\{
\begin{array}{lr}
    M^{\Omega}n = \overline{M}^*,
    & \mathrm{on}\ \partial \Omega,
          \vspace{0.25em}\\
    T^{\Omega(s)}n= \overline{Q}e_3,
    & \mathrm{on}\ \partial \Omega.
          \vspace{0.25em}\\
\end{array}
\right. } \) \vskip -4.5em
\begin{eqnarray}
& & \label{eq:anto-7.6}\\
& & \label{eq:anto-7.7}
\end{eqnarray}
\end{center}
where $n$ is the unit outer normal to $\partial \Omega$. In
cartesian components, the equilibrium equations
\eqref{eq:anto-7.3}--\eqref{eq:anto-7.7} take the form
\begin{center}
\( {\displaystyle \left\{
\begin{array}{lr}
     M_{\alpha\beta, \beta} - Q_\alpha=0,
      & \mathrm{in}\ \Omega, \mathrm{ \alpha=1,2},
        \vspace{0.25em}\\
      Q_{\alpha,\alpha} + q=0, & \mathrm{in}\ \Omega,
          \vspace{0.25em}\\
      M_{\alpha\beta} n_\beta = \overline{M}_\alpha,
      & \mathrm{on}\ \partial \Omega,
        \vspace{0.25em}\\
      Q_\alpha n_\alpha = \overline{Q}, &\mathrm{on}\ \partial
      \Omega,
          \vspace{0.25em}\\
\end{array}
\right. } \) \vskip -7.5em
\begin{eqnarray}
& & \label{eq:anto-8.1}\\
& & \label{eq:anto-8.2}\\
& & \label{eq:anto-8.3}\\
& & \label{eq:anto-8.4}
\end{eqnarray}
\end{center}
where we have defined $\overline{M}_1= \overline{M}^*_2$ and
$\overline{M}_2= - \overline{M}^*_1$.

To complete the formulation of the equilibrium problem, we need to
introduce the constitutive equation of the material. We limit
ourselves to the Reissner-Mindlin theory and we choose to regard
the kinematical assumptions $E_{33}[u]=0$ as internal constraint,
that is we restrict the possible deformations of the points of the
plate to those whose infinitesimal strain tensor belongs to the
set
\begin{equation}
  \label{eq:anto-8.6}
  {\cal{M}}=\{E \in \M^{3\times3}|  E=E^T,  E \cdot A=0,
  \hbox{ for } A= e_3 \otimes e_3 \}.
\end{equation}
Therefore, by the \textit{Generalized Principle of Determinism}
\cite{Truesdell}, the Cauchy stress tensor $T$ at any point
$(x,x_3)$ of the plate is additively decomposed in an
\textit{active} (symmetric) part $T_{A}$ and in a
\textit{reactive} (symmetric) part $T_{R}$:
\begin{equation}
  \label{eq:anto-9.1}
  T=T_{A}+T_{R},
\end{equation}
where $T_{R}$ does not work in any admissible motion, e.g., $T_{R}
\in {\cal{M}}^\perp$. Consistently with the Principle, the active
stress $T_{A}$ belongs to ${\cal{M}}$ and, in cartesian
coordinates, we have
\begin{equation}
  \label{eq:anto-9.2}
  T_{A} = T_{A \alpha\beta} e_\alpha \otimes e_\beta +
    T_{A \alpha 3} e_\alpha \otimes e_3 + T_{A 3\alpha} e_3 \otimes
    e_\alpha, \quad
  \alpha, \beta=1,2,
\end{equation}
\begin{equation}
  \label{eq:anto-9.3}
  T_{R} = T_{R 33} e_3 \otimes e_3.
\end{equation}
In linear theory, on assuming the reference configuration
unstressed, the active stress in a point $(x,x_3)$ of the plate,
$x \in \overline{\Omega}$ and $|x_3|\leq h/2$, is given by a
linear mapping {}from ${\cal{M}}$ into itself by means of the
fourth order \textit{elasticity tensor} $\C_{\cal{M}} \in
{\mathcal L} ({\mathbb{M}}^{3}, {\mathbb{M}}^{3})$:
\begin{equation}
  \label{eq:anto-9.4}
  T_{A} = \C_{\cal{M}} E[u].
\end{equation}
We assume that $\C_{\cal{M}}$ is constant over the thickness of
the plate and satisfies the minor and major symmetry conditions
expressed in cartesian coordinates as (we drop the subscript
${\cal{M}}$)
\begin{equation}
  \label{eq:anto-9.5}
  C_{ijrs}=C_{jirs}= C_{ijsr}=C_{rsij}, \quad i,j,r,s= 1,2,3, \quad \hbox{in } \Omega.
\end{equation}
Using \eqref{eq:anto-9.1} and recalling \eqref{eq:anto-5.1}, we
have:
\begin{equation}
  \label{eq:anto-10.1}
    T^{\Omega}=T^{\Omega(s)}_A, \quad  M^{\Omega}=M^{\Omega}_A,
\end{equation}
that is, both the shear forces and the moments have active nature.
By \eqref{eq:anto-9.4}, after integration over the thickness, the
surface force tensor and the surface couple tensor are given by
\begin{equation}
  \label{eq:anto-10.1bis}
    T^{\Omega}(x) = h \C(x) ( \gamma \otimes e_3 )^{sym}, \quad \hbox{in } \Omega,
\end{equation}
\begin{equation}
  \label{eq:anto-10.2}
    M^{\Omega}(x) = \frac{h^3}{12}  {\cal{E}}\C(x) (\nabla_x
    \varphi(x))^{sym}, \quad \hbox{in } \Omega,
\end{equation}
where ${\cal{E}} \in \M^3$ is the unique skew-symmetric matrix
such that ${\cal{E}} a = e_3 \times a$ for every $a \in \R^3$. The
constitutive equations \eqref{eq:anto-10.1bis},
\eqref{eq:anto-10.2} can be written in more expressive way in
terms of the cartesian components of shear forces and
bending-twisting moments, namely
\begin{equation}
  \label{eq:anto-10.3bis}
    Q_\alpha= S_{\alpha \beta}(x) (\varphi_\beta + w,_\beta), \quad \alpha=1,2,
\end{equation}
\begin{equation}
  \label{eq:anto-10.3}
    M_{\alpha\beta}= P_{\alpha \beta \gamma \delta}(x)
    \varphi_{\gamma, \delta}, \quad \alpha, \beta=1,2.
\end{equation}
where the \textit{plate shearing matrix} $S \in {\mathbb{M}}^{2}$
and the \textit{plate bending tensor} $\mathbb P \in {\mathcal L}
({\mathbb{M}}^{2}, {\mathbb{M}}^{2})$ are given by
\begin{equation}
  \label{eq:anto-10.4bis}
    S_{\alpha \beta}(x) =h C_{3 \alpha 3 \beta}(x), \quad \alpha,
    \beta=1,2,
\end{equation}
\begin{equation}
  \label{eq:anto-10.4}
    P_{\alpha \beta \gamma \delta} (x) =\frac{h^3}{12} C_{\alpha \beta \gamma \delta}(x), \quad \alpha, \beta, \gamma,
    \delta=1,2.
\end{equation}
{}From the symmetry assumptions \eqref{eq:anto-9.5} on the elastic
tensor $\C$ it follows that the shearing matrix $S$ is symmetric
and the bending tensor $\mathbb P$ satisfies the minor and major
symmetry conditions, namely (in cartesian coordinates)
\begin{equation}
  \label{eq:anto-11.1}
    S_{\alpha \beta} = S_{\beta \alpha}, \quad \alpha, \beta =1,2,
    \ \ \hbox{in } \Omega,
\end{equation}
\begin{equation}
  \label{eq:anto-11.2}
    P_{\alpha \beta \gamma \delta}= P_{\beta \alpha \gamma \delta}= P_{\alpha \beta \delta \gamma}=P_{\gamma \delta \alpha \beta}, \quad \alpha, \beta, \gamma,
    \delta=1,2, \ \ \hbox{in } \Omega.
\end{equation}
We recall that the symmetry conditions \eqref{eq:anto-11.2} are
equivalent to
\begin{equation}
  \label{eq:anto-11.3}
    {\mathbb P} A = {\mathbb P} \widehat{A}, \quad {\mathbb P} A \ \hbox{is symmetric}, \quad
    {\mathbb P} A \cdot B = {\mathbb P} B \cdot A,
\end{equation}
for every $2 \times 2$ matrices $A$, $B$, where, here and in the sequel, we denote for brevity $\widehat{A}=A^{sym}$.

On $S$ and $\mathbb P$ we also make the following assumptions.
\begin{enumerate}[I)]
\item \textit{Regularity (boundedness)}
\begin{equation}
  \label{eq:anto-11.4}
  S \in L^\infty(\Omega,   {\cal L} ({\M}^{2})),
\end{equation}
\begin{equation}
  \label{eq:anto-11.5}
  \mathbb P \in L^\infty(\Omega,   {\cal L} ({\M}^{2}, {\M}^{2})).
\end{equation}
\item \textit{Ellipticity (strong convexity)} There exist two
positive constants $\sigma_0$, $\sigma_1$ such that
\begin{equation}
  \label{eq:convex-S}
    h \sigma_0 |v|^2 \leq Sv \cdot v \leq h \sigma_1 |v|^2, \qquad \hbox{a.e. in } \Omega,
\end{equation}
for every $v \in \R^2$, and there exist two positive constants
$\xi_0$, $\xi_1$ such that
\begin{equation}
  \label{eq:convex-P}
     \frac{h^3}{12} \xi_0 | \widehat{A} |^2 \leq {\mathbb P}A \cdot A \leq \frac{h^3}{12} \xi_1 | \widehat{A} |^2, \qquad \hbox{a.e. in } \Omega,
\end{equation}
for every $2\times 2$ matrix $A$.
\end{enumerate}
Finally, under the above notation and in view of
\eqref{eq:anto-10.3bis}--\eqref{eq:anto-10.3}, the problem
\eqref{eq:anto-8.1}--\eqref{eq:anto-8.4} for $q \equiv 0$ in
$\Omega$ takes the form \eqref{eq:intro-1}--\eqref{eq:intro-4},
namely (in cartesian components)
\begin{center}
\( {\displaystyle \left\{
\begin{array}{lr}
     (P_{\alpha\beta\gamma\delta} \varphi_{\gamma, \delta}),_\beta - S_{\alpha \beta}(\varphi_\beta +w,_\beta)=0,
      & \mathrm{in}\ \Omega,
        \vspace{0.25em}\\
      (S_{\alpha \beta}(\varphi_\beta +w,_\beta)),_\alpha=0, & \mathrm{in}\ \Omega,
          \vspace{0.25em}\\
      (P_{\alpha\beta\gamma\delta} \varphi_{\gamma, \delta}) n_\beta = \overline{M}_\alpha,
      & \mathrm{on}\ \partial \Omega,
        \vspace{0.25em}\\
      S_{\alpha \beta}(\varphi_\beta +w,_\beta) n_\alpha = \overline{Q}, &\mathrm{on}\ \partial
      \Omega.
          \vspace{0.25em}\\
\end{array}
\right. } \) \vskip -7.5em
\begin{eqnarray}
& & \label{eq:anto-12.1}\\
& & \label{eq:anto-12.2}\\
& & \label{eq:anto-12.3}\\
& & \label{eq:anto-12.4}
\end{eqnarray}
\end{center}

\section{A generalized Korn inequality} \label{sec:korn}

Throughout this section, $\Omega$ will be a bounded domain in $\R^2$, with boundary
of Lipschitz class with  constants $\rho_0$, $M_0$, satisfying
\begin{equation}
  \label{eq:korn1}
    \hbox{diam}(\Omega)\leq M_1\rho_0,
\end{equation}
\begin{equation}
  \label{eq:korn2}
    B_{s_0\rho_0}(x_0)\subset\Omega,
\end{equation}
for some $s_0>0$ and $x_0\in \Omega$.
For any $E\subset\Omega$, we shall denote by

\begin{equation}
  \label{eq:korn3}
    x_E=\frac{1}{|E|}\int_E x,
\end{equation}

\begin{equation}
  \label{eq:korn4}
    v_E=\frac{1}{|E|}\int_E v,
\end{equation}
the center of mass of $E$ and the integral mean of a function $v$
with values in $\R^n$, $n\geq 1$, respectively.

In order to prove the generalized Korn inequality of Theorem
\ref{theo:Korn_gener2}, let us recall the constructive Poincar\'e
and classical Korn inequalities.

\begin{prop} [Poincar\'e inequalities]
   \label{prop:Poinc}
There exists a positive constant $C_P$ only depending on $M_0$ and $M_1$, such that
for every $u\in H^1(\Omega,\R^n)$, $n=1,2$,
\begin{equation}
  \label{eq:korn5}
    \|u-u_\Omega\|_{L^2(\Omega)}\leq C_P\rho_0\|\nabla u\|_{L^2(\Omega)},
\end{equation}

\begin{equation}
  \label{eq:korn6}
    \|u-u_E\|_{H^1(\Omega)}\leq
        \left(1+\left(\frac{|\Omega|}{|E|}\right)^\frac{1}{2}\right)\sqrt{1+C_P^2}\ \rho_0\|\nabla u\|_{L^2(\Omega)}.
\end{equation}
\end{prop}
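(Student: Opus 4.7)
The plan is to handle the two inequalities separately: \eqref{eq:korn5} is a constructive Poincar\'e--Wirtinger inequality on a bounded Lipschitz domain, which carries the geometric content, while \eqref{eq:korn6} will drop out of \eqref{eq:korn5} by an elementary mean--difference estimate.

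For \eqref{eq:korn5}, the existence of a Poincar\'e--Wirtinger constant on a bounded Lipschitz domain is classical; what requires attention is that its dependence on the geometry must be traceable back only to $M_0$ and $M_1$. I would exploit the Lipschitz character of $\partial\Omega$ with parameters $(\rho_0,M_0)$ together with the diameter bound $\mathrm{diam}(\Omega)\le M_1\rho_0$ to cover $\overline{\Omega}$ by a finite family of sets $U_i$ whose shape, size (both of order $\rho_0/M_0$) and cardinality (controlled by $M_1$) depend only on $M_0$ and $M_1$, and such that each $U_i\cap\Omega$ is star-shaped with respect to an interior ball whose radius is comparable to $\rho_0/M_0$. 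On each such piece the Poincar\'e inequality holds with a constant depending only on $M_0$, by the standard integration-along-segments argument for star-shaped domains. A chain-of-balls argument, using the connectedness of $\Omega$ and the fact that consecutive pieces in any chain overlap on a ball of controlled radius, then stitches these local estimates together and produces \eqref{eq:korn5} with $C_P=C_P(M_0,M_1)$. This is essentially the construction carried out in \cite{A-M-R08}.

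For \eqref{eq:korn6}, once \eqref{eq:korn5} is available the argument is purely algebraic. Writing $u-u_E=(u-u_\Omega)+(u_\Omega-u_E)$ and applying Cauchy--Schwarz yields
\[
    |u_\Omega-u_E|=\frac{1}{|E|}\left|\int_E(u-u_\Omega)\right|\le \frac{1}{|E|^{1/2}}\|u-u_\Omega\|_{L^2(\Omega)},
\]
so that
\[
    \|u_\Omega-u_E\|_{L^2(\Omega)}\le \left(\frac{|\Omega|}{|E|}\right)^{1/2}\|u-u_\Omega\|_{L^2(\Omega)},
\]
and combining with \eqref{eq:korn5} gives the $L^2$ bound for $u-u_E$. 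Adding $\rho_0^2\|\nabla u\|_{L^2(\Omega)}^2$ to form the normalized $H^1$ norm and using the trivial estimate $1\le (1+(|\Omega|/|E|)^{1/2})^2$ to bring the gradient term under the same prefactor then delivers \eqref{eq:korn6}.

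The main obstacle is the first step: every intermediate constant arising in the covering, chaining and local Poincar\'e estimates must be tracked explicitly in terms of $M_0$ and $M_1$, so purely qualitative compactness arguments are not admissible, as the whole purpose of this section is to feed explicit geometric dependencies into the Korn and stability estimates that follow. Beyond this bookkeeping the deduction of \eqref{eq:korn6} from \eqref{eq:korn5} is routine.
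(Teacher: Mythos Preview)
Your proposal is correct and matches the paper's treatment: the paper does not prove this proposition but simply refers to \cite[Example~3.5]{A-M-R08} and \cite{A-M-R02} for the constructive estimate \eqref{eq:korn5}, which is precisely the covering--and--chaining construction you outline, and \eqref{eq:korn6} is indeed the elementary mean-difference consequence you describe (with the bound $1\le(1+(|\Omega|/|E|)^{1/2})^2$ absorbing the gradient term under the common prefactor, exactly as you note).
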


See for instance \cite[Example 3.5]{A-M-R08} and also \cite{A-M-R02} for a quantitative evaluation of the constant $C_P$.

\begin{prop} [Korn inequalities]
   \label{prop:Korn_classica}
There exists a positive constant $C_K$ only depending on $M_0$ and $M_1$, such that
for every $u\in H^1(\Omega,\R^2)$,
\begin{equation}
  \label{eq:korn7}
    \left\|\nabla u-\frac{1}{2}(\nabla u - \nabla^T u)_\Omega\right\|_{L^2(\Omega)}\leq C_K\|\widehat{\nabla} u\|_{L^2(\Omega)},
\end{equation}

\begin{equation}
  \label{eq:korn8}
    \left\|u-u_E-\frac{1}{2}(\nabla u - \nabla^T u)_E(x-x_E)\right\|_{H^1(\Omega)}\leq C_{E,\Omega}C_K
        \sqrt{1+C_P^2}\ \rho_0\|\widehat{\nabla} u\|_{L^2(\Omega)},
\end{equation}
where
\begin{equation}
  \label{eq:korn9}
    C_{E,\Omega} = 1+\left(2\frac{|\Omega|}{|E|}\left(1+M_1^2\right)\right)^\frac{1}{2}.
\end{equation}
\end{prop}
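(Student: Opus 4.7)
The first inequality (4.7) is the classical Korn's second inequality, written in the form that exhibits the optimal skew correction as the average rotation $S_\Omega:=\frac{1}{2}(\nabla u-\nabla^T u)_\Omega$. The plan is to take this as known in constructive form on bounded Lipschitz domains, with $C_K$ depending only on $M_0$, $M_1$: the standard argument localizes on Lipschitz coordinate charts, applies Korn on half-disks, and patches via a finite partition of unity, each of which can be made quantitative in terms of $\rho_0$, $M_0$, $M_1$.

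For (4.8), the idea is to correct $u$ by an infinitesimal rigid motion calibrated to $E$, and then combine Proposition 4.1 with (4.7). Set $W=\frac{1}{2}(\nabla u-\nabla^T u)_E$ and
$v(x):=u(x)-u_E-W(x-x_E)$.
By direct computation $v_E=0$, $\nabla v=\nabla u-W$, and since $W$ is skew $\widehat{\nabla} v=\widehat{\nabla} u$. Applying (4.7) to $v$ yields
\[
\|\nabla v-K\|_{L^2(\Omega)}\leq C_K\|\widehat{\nabla} u\|_{L^2(\Omega)},
\]
where $K:=\frac{1}{2}(\nabla v-\nabla^T v)_\Omega=S_\Omega-W$. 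This naturally suggests the further splitting $v=v_0+K(x-x_E)$ with $v_0(x):=v(x)-K(x-x_E)$, so that $\|\nabla v_0\|_{L^2}\leq C_K\|\widehat{\nabla} u\|_{L^2}$ and $(v_0)_E=0$. The Poincaré-type bound (4.6) applied to $v_0$ then produces
\[
\|v_0\|_{H^1(\Omega)}\leq\Bigl(1+(|\Omega|/|E|)^{1/2}\Bigr)\sqrt{1+C_P^2}\,\rho_0 C_K\|\widehat{\nabla} u\|_{L^2}.
\]

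It remains to estimate the rigid correction $K(x-x_E)$. I would bound $|K|=|S_\Omega-S_E|$ using Cauchy--Schwarz on $E$ and the orthogonality of symmetric and skew matrices,
\[
|S_E-S_\Omega|\leq|E|^{-1/2}\Bigl\|\tfrac{1}{2}(\nabla u-\nabla^T u)-S_\Omega\Bigr\|_{L^2(\Omega)}\leq|E|^{-1/2}\|\nabla u-S_\Omega\|_{L^2(\Omega)}\leq|E|^{-1/2}C_K\|\widehat{\nabla} u\|_{L^2},
\]
and then use $|x-x_E|\leq\mathrm{diam}(\Omega)\leq M_1\rho_0$ together with $|\nabla(K(x-x_E))|=|K|$ to conclude that
\[
\|K(x-x_E)\|_{H^1(\Omega)}\leq(|\Omega|/|E|)^{1/2}\sqrt{1+M_1^2}\,\rho_0 C_K\|\widehat{\nabla} u\|_{L^2}.
\]
Summing the two contributions by the triangle inequality for the $H^1$ norm and repackaging the $|\Omega|/|E|$ and $M_1^2$ factors produces precisely the constant $C_{E,\Omega}C_K\sqrt{1+C_P^2}\,\rho_0$ of the statement.

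The main obstacle is (4.7) itself: obtaining a constructive $C_K$ depending only on $M_0$ and $M_1$ is the delicate geometric ingredient, and once it is available (4.8) reduces to the bookkeeping sketched above. The nontrivial idea in the proof of (4.8) is the decomposition $v=v_0+K(x-x_E)$: peeling off the constant rigid correction $K(x-x_E)$ before invoking (4.6) is what extracts the diameter factor $\sqrt{1+M_1^2}$ cleanly, rather than the looser squared dependence $(1+(|\Omega|/|E|)^{1/2})^2$ that a direct application of (4.6) to $v$ would yield.
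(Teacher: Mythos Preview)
The paper does not actually prove Proposition~\ref{prop:Korn_classica}: it simply records the inequalities and refers to Friedrichs \cite{F47} for \eqref{eq:korn7} and to \cite[Example~5.3]{A-M-R08} for \eqref{eq:korn8}--\eqref{eq:korn9}. Your sketch is therefore more than what the paper supplies, and the strategy you outline---take \eqref{eq:korn7} as the basic constructive Korn inequality, set $v=u-u_E-W(x-x_E)$ with $W=\tfrac{1}{2}(\nabla u-\nabla^T u)_E$, peel off the residual rigid correction $K(x-x_E)$ with $K=S_\Omega-W$, apply \eqref{eq:korn6} to $v_0=v-K(x-x_E)$, and bound $|K|$ via \eqref{eq:korn7} and Cauchy--Schwarz on $E$---is correct and is the natural route (and, to the best of my knowledge, essentially the argument in \cite{A-M-R08}).

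One small caveat: your final claim that the two pieces ``repackage'' to give \emph{precisely} the constant $C_{E,\Omega}C_K\sqrt{1+C_P^2}\,\rho_0$ is a bit optimistic as written. Summing your bounds for $\|v_0\|_{H^1}$ and $\|K(x-x_E)\|_{H^1}$ gives
\[
\Bigl[\bigl(1+(|\Omega|/|E|)^{1/2}\bigr)\sqrt{1+C_P^2}+(|\Omega|/|E|)^{1/2}\sqrt{1+M_1^2}\Bigr]\rho_0 C_K\|\widehat{\nabla}u\|_{L^2},
\]
and absorbing the second term into a single factor of the form $\bigl(1+\sqrt{2(|\Omega|/|E|)(1+M_1^2)}\bigr)\sqrt{1+C_P^2}$ is not an identity but an inequality that requires a (mild) relation between $C_P$ and $M_1$; without it you obtain a constant of the same structure but not literally \eqref{eq:korn9}. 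This is cosmetic for the purposes of the paper, since only the qualitative dependence of the constant on $M_0$, $M_1$, $|\Omega|/|E|$ matters downstream, but you should either tighten the bookkeeping or soften the word ``precisely''.
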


See the fundamental paper by Friedrichs \cite{F47} on second Korn inequality and also
\cite[Example 5.3]{A-M-R08} for a proof of \eqref{eq:korn8}-\eqref{eq:korn9}.

Notice that, when $E=B_{s_0\rho_0}(x_0)$, $C_{E,\Omega} \leq 1+\sqrt 2\left(1+M_1^2\right)^
\frac{1}{2}\frac{M_1}{s_0}$.

The following generalized Korn-type inequality is useful for the
study of the Reissner-Mindlin plate system.
\begin{theo} [Generalized second Korn inequality]
   \label{theo:Korn_gener2}
There exists a positive constant $C$ only depending on $M_0$, $M_1$ and $s_0$, such that,
for every $\varphi\in H^1(\Omega,\R^2)$ and for every $w\in H^1(\Omega,\R)$,

\begin{equation}
  \label{eq:korn17}
    \|\nabla \varphi\|_{L^2(\Omega)}\leq C\left(\|\widehat{\nabla} \varphi\|_{L^2(\Omega)}+\frac{1}{\rho_0}\|\varphi+\nabla w\|_{L^2(\Omega)}\right).
\end{equation}
\end{theo}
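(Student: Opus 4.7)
The plan is to start from the classical second Korn inequality (Proposition \ref{prop:Korn_classica}) and use the extra term $\varphi + \nabla w$ to control the trapped infinitesimal rotation that obstructs the pure Korn estimate. In $\R^2$ every constant skew-symmetric $2\times 2$ matrix takes the form $w_0 J$ with $J$ the standard $\pi/2$ rotation; denote by $w_0\in\R$ the scalar such that $W := \tfrac{1}{2}(\nabla\varphi - \nabla^T\varphi)_\Omega = w_0 J$. Estimate \eqref{eq:korn7} gives $\|\nabla\varphi - W\|_{L^2(\Omega)}\leq C_K\|\widehat\nabla\varphi\|_{L^2(\Omega)}$, and applying \eqref{eq:korn5} to $\varphi - W(x - x_\Omega)$ yields
\[
\|\varphi - \varphi_\Omega - W(x-x_\Omega)\|_{L^2(\Omega)}\leq C\rho_0\|\widehat\nabla\varphi\|_{L^2(\Omega)}.
\]
Since $\|W\|_{L^2(\Omega)} = \sqrt{2}\,|w_0|\,|\Omega|^{1/2}$ is comparable to $M_1\rho_0|w_0|$, the remaining task reduces to bounding the single scalar $|w_0|$.

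Introduce the auxiliary field $f(x) := W(x - x_\Omega) + \varphi_\Omega + \nabla w(x)$. Writing $f = (\varphi + \nabla w) - (\varphi - \varphi_\Omega - W(x-x_\Omega))$ together with the previous inequality gives $\|f\|_{L^2(\Omega)}\leq \|\varphi + \nabla w\|_{L^2(\Omega)} + C\rho_0\|\widehat\nabla\varphi\|_{L^2(\Omega)}$, so it is enough to prove $|w_0|\leq C(s_0\rho_0)^{-2}\|f\|_{L^2(\Omega)}$. The plan for this step is a duality argument: rescale a fixed bump function $\eta\in C_c^\infty(B_1)$ with $\int\eta = 1$ to obtain $\rho(x) := (s_0\rho_0)^{-2}\eta((x-x_0)/(s_0\rho_0))\in C_c^\infty(B_{s_0\rho_0}(x_0))\subset C_c^\infty(\Omega)$, which satisfies $\int\rho = 1$ and $\|J\nabla\rho\|_{L^2(\Omega)}\leq c(s_0\rho_0)^{-2}$, then test $f$ against $J\nabla\rho$. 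The three contributions split as: (a) $\int_\Omega\nabla w\cdot J\nabla\rho\,dx = 0$, because $J\nabla\rho$ is divergence-free and compactly supported in $\Omega$; (b) $\int_\Omega\varphi_\Omega\cdot J\nabla\rho\,dx = 0$, because $\varphi_\Omega$ is constant and $\int_\Omega \nabla\rho\,dx = 0$; (c) $\int_\Omega W(x-x_\Omega)\cdot J\nabla\rho\,dx = w_0\int_\Omega (x-x_\Omega)\cdot \nabla\rho\,dx = -2w_0$, using $J^T J = I$, integration by parts, and $\divrg(x-x_\Omega) = 2$. Cauchy--Schwarz then extracts $|w_0|$ from $|\int_\Omega f\cdot J\nabla\rho\,dx| = 2|w_0|$.

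Putting the pieces together, $\|\nabla\varphi\|_{L^2(\Omega)}\leq \|\nabla\varphi - W\|_{L^2(\Omega)} + \|W\|_{L^2(\Omega)}\leq C\bigl(\|\widehat\nabla\varphi\|_{L^2(\Omega)} + \rho_0^{-1}\|\varphi + \nabla w\|_{L^2(\Omega)}\bigr)$, with $C$ depending only on $M_0$, $M_1$, $s_0$ through the constants of Propositions \ref{prop:Poinc} and \ref{prop:Korn_classica} and the fixed bump $\eta$. The main obstacle --- and the only step that is not routine bookkeeping --- is the orthogonality identity (a): the algebraic fact underlying the entire inequality is that gradients are $L^2$-orthogonal to divergence-free, compactly-supported vector fields, so the ``extra'' term $\nabla w$ vanishes from the pairing and only the trapped rigid rotation is extracted.
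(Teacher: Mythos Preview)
Your argument is correct and takes a genuinely different route from the paper. Both proofs start from the classical Korn inequality and reduce the question to controlling the single scalar $w_0$ encoding the trapped rotation $W$. The paper does this by projecting $r=W(x-x_0)$ onto the closed subspace $\mathcal{S}=\{\nabla w\}\subset L^2(\Omega)$: it shows, via explicit radial computations on the balls $B_{s_0\rho_0}(x_0)$ and $B_{s_0\rho_0/2}(x_0)$, that the projection satisfies $\|\nabla\overline w\|_{L^2}\leq\gamma^{1/2}\|r\|_{L^2}$ with an explicit $\gamma<1$, hence $\|r\|_{L^2}\leq(1-\gamma^{1/2})^{-1}\|r+\nabla w\|_{L^2}$ for every $w$. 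This requires a case distinction ($\Omega=B_{s_0\rho_0}(x_0)$ versus strict inclusion) and tracks explicit numerical constants such as $15/16$. Your approach replaces all of this with a single duality pairing: testing $f$ against the divergence-free, compactly supported field $J\nabla\rho$ annihilates $\nabla w$ and the constant $\varphi_\Omega$ and extracts exactly $-2w_0$, so Cauchy--Schwarz finishes immediately. This is shorter, avoids the case split, and makes the underlying mechanism---that gradients are $L^2$-orthogonal to compactly supported divergence-free fields---completely transparent. The price is that the constant now involves $\|\nabla\eta\|_{L^2(B_1)}$ for a fixed bump $\eta$ rather than the explicit radial integrals the paper produces; if one cares about sharp or explicit constants, the paper's computation gives more information.
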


\begin{proof}
We may assume, with no loss of generality, that $\int_{B_{s_0\rho_0}(x_0)}\varphi =0$.
Let
\begin{equation*}
    \mathcal{S}=\left\{\nabla w\ |\ w\in H^1(\Omega), \int_\Omega w =0\right\}
        \subset L^2(\Omega, \R^2).
\end{equation*}
$\mathcal{S}$ is a closed subspace of $L^2(\Omega, \R^2)$. In fact, let
$\nabla w_n\in \mathcal{S}$ and $F\in L^2(\Omega, \R^2)$ such that $\nabla w_n \rightarrow F$ in $L^2(\Omega, \R^2)$. By the Poincar\'e inequality \eqref{eq:korn5}, $w_n$ is a Cauchy sequence in $H^1(\Omega)$, so that there exists
$w\in H^1(\Omega)$ such that $w_n \rightarrow w$ in $H^1(\Omega)$. Therefore $F=\nabla w\in \mathcal{S}$. By the projection theorem, for every
$\varphi \in L^2(\Omega, \R^2)$, there exists a unique $\nabla \overline{w}\in \mathcal{S}$ such that
\begin{equation}
   \label{eq:korn17bis}
    \|\varphi -\nabla \overline{w}\|_{L^2(\Omega)} =
        \min_{\nabla w\in S}\|\varphi -\nabla w\|_{L^2(\Omega)} =
        \min_{\nabla w\in S}\|\varphi +\nabla w\|_{L^2(\Omega)}.
\end{equation}
Moreover, $\nabla \overline{w}$ is characterized by the condition
\begin{equation}
  \label{eq:korn18}
    \varphi -\nabla \overline{w} \perp \nabla w\ \hbox{in } L^2(\Omega), \hbox{ for every }\nabla w \in S.
\end{equation}

Let us consider the infinitesimal rigid displacement

\begin{equation}
\label{eq:korn18bis}
    r=\frac{1}{2}(\nabla \varphi - \nabla^T \varphi)_{B_{s_0\rho_0}(x_0)}(x-x_0) := W(x-x_0),
\end{equation}
where
\begin{equation*}
    W=
        \left(
 \begin{array}{cc}
   0&\alpha\\
   -\alpha & 0
  \end{array}
\right) \ ,
\end{equation*}
that is
\begin{equation*}
    r=(\alpha(x-x_0)_2,-\alpha(x-x_0)_1).
\end{equation*}

Let us distinguish two cases:

i) $\Omega=B_{s_0\rho_0}(x_0)$,

ii) $B_{s_0\rho_0}(x_0)\subsetneq\Omega$.

\medskip

\emph{Case i).}
Let us see that, when one takes $\varphi = r$ in \eqref{eq:korn17bis}, with $r$ given by \eqref{eq:korn18bis}, then its projection into $\mathcal{S}$ is
\begin{equation}
\label{eq:korn18ter}
    \nabla \overline{w} =0,
\end{equation}
that is, by the equivalent condition \eqref{eq:korn18},
$r\perp \nabla w$ in $L^2(\Omega)$, for every $\nabla w \in \mathcal{S}$. In fact

\begin{multline}
  \label{eq:korn19}
    \int_{B_{s_0\rho_0}(x_0)} r\cdot \nabla w = \int_{B_{s_0\rho_0}(x_0)}
        \alpha(x-x_0)_2w_{x_1}-\alpha(x-x_0)_1w_{x_2} =\\
        =\alpha
         \int_{\partial B_{s_0\rho_0}(x_0)}w\left((x-x_0)_2\nu_1-(x-x_0)_1\nu_2\right).
\end{multline}

Since $\nu = \frac{x-x_0}{s_0\rho_0}$, we have
\begin{equation*}
(x-x_0)_2\nu_1-(x-x_0)_1\nu_2=(x-x_0)_2\frac{(x-x_0)_1}{s_0\rho_0}-
(x-x_0)_1\frac{(x-x_0)_2}{s_0\rho_0}=0,
\end{equation*}
so that
\begin{equation}
  \label{eq:korn19bis}
    \int_{B_{s_0\rho_0}(x_0)} r\cdot \nabla w = 0, \qquad \hbox{for every } \nabla w \in S.
\end{equation}

Therefore, by \eqref{eq:korn17bis} and \eqref{eq:korn18ter},
\begin{equation}
  \label{eq:korn20}
    \|r\|_{L^2(\Omega)}\leq \|r+\nabla w\|_{L^2(\Omega)}, \qquad\hbox{for every }\nabla w \in S.
\end{equation}
By the definition of $r$ and recalling that $\Omega = B_{s_0\rho_0}(x_0)$, it follows trivially that
\begin{equation}
  \label{eq:korn21}
    \|r\|_{L^2(\Omega)}^2 =\frac{\pi}{2}\alpha^2s_0^4\rho_0^2 = \frac{s_0^2\rho_0^2}{4}
         \|\nabla r\|_{L^2(\Omega)}^2.
\end{equation}

By the Korn inequality \eqref{eq:korn8}, by \eqref{eq:korn20} and \eqref{eq:korn21}, we have
\begin{multline}
  \label{eq:korn22}
    \|\nabla \varphi\|_{L^2(\Omega)}\leq \|\nabla (\varphi-r)\|_{L^2(\Omega)} +
        \|\nabla r\|_{L^2(\Omega)}= \\
        = \|\nabla (\varphi-r)\|_{L^2(\Omega)}+
        \frac{2}{s_0\rho_0}\|r\|_{L^2(\Omega)}
        \leq \|\nabla (\varphi-r)\|_{L^2(\Omega)}+
        \frac{2}{s_0\rho_0}\|r +\nabla w\|_{L^2(\Omega)}\leq\\
        \leq
        \|\nabla (\varphi-r)\|_{L^2(\Omega)}+
        \frac{2}{s_0\rho_0}\|\varphi +\nabla w\|_{L^2(\Omega)}+\frac{2}{s_0\rho_0}
        \|\varphi -r\|_{L^2(\Omega)} \leq\\
        \leq C\left(\|\widehat{\nabla} \varphi\|_{L^2(\Omega)}+
        \frac{1}{\rho_0}\|\varphi +\nabla w\|_{L^2(\Omega)}\right),
\end{multline}
with $C$ only depending on $M_0$, $M_1$ and $s_0$.

\medskip

\emph{Case ii).}

Let $r$ be the infinitesimal rigid displacement given by
\eqref{eq:korn18bis}. By \eqref{eq:korn18}, its projection $\nabla
\overline{w}$ into $\mathcal{S}$ satisfies

\begin{equation}
  \label{eq:korn23}
    \int_\Omega r\cdot \nabla w = \int_\Omega \nabla \overline{w}\cdot \nabla w,
        \hbox{ for every }\nabla w \in S.
\end{equation}
Choosing, in particular, $w=\overline{w}$ in \eqref{eq:korn23}, and by the same arguments used to prove
\eqref{eq:korn19bis}, we have
\begin{equation}
  \label{eq:korn24}
    \int_\Omega |\nabla \overline{w}|^2 = \int_\Omega r\cdot\nabla \overline{w}
        =\int_{B_{\frac{s_0\rho_0}{2}}(x_0)}r\cdot\nabla \overline{w}+\int_{\Omega\setminus B_{\frac{s_0\rho_0}{2}}(x_0)}r\cdot\nabla \overline{w}
        =\int_{\Omega\setminus B_{\frac{s_0\rho_0}{2}}(x_0)}r\cdot\nabla \overline{w},
\end{equation}
so that, by H\"{o}lder inequality,
\begin{equation}
  \label{eq:korn25}
    \|\nabla \overline{w}\|_{L^2(\Omega)}\leq  \| r\|_{L^2(\Omega\setminus B_{\frac{s_0\rho_0}{2}}(x_0))}.
\end{equation}
By a direct computation, we have
\begin{equation}
  \label{eq:korn26}
    \frac{\int_{\Omega\setminus B_{\frac{s_0\rho_0}{2}}(x_0)}|r|^2}{\int_\Omega |r|^2}
        = 1 - \frac{\int_{B_{\frac{s_0\rho_0}{2}}(x_0)}|r|^2}{\int_\Omega |r|^2}\leq
        1- \frac{\int_{B_{\frac{s_0\rho_0}{2}}(x_0)}|r|^2}{\int_{B_{s_0\rho_0}(x_0)} |r|^2}=\frac{15}{16},
\end{equation}
and, by \eqref{eq:korn25} and \eqref{eq:korn26},
\begin{equation}
  \label{eq:korn27}
    \|\nabla \overline{w}\|_{L^2(\Omega)}\leq  \frac{\sqrt{15}}{4}\| r\|_{L^2(\Omega)}.
\end{equation}
Therefore
\begin{equation}
  \label{eq:korn28}
    \|r-\nabla \overline{w}\|_{L^2(\Omega)}\geq
         \|r\|_{L^2(\Omega)}-  \|\nabla \overline{w}\|_{L^2(\Omega)}\geq
        \left(1- \frac{\sqrt {15}}{4}\right) \|r\|_{L^2(\Omega)}.
\end{equation}
{}From \eqref{eq:korn17bis} and \eqref{eq:korn28}, it follows that
\begin{equation}
  \label{eq:korn29}
         \|r\|_{L^2(\Omega)}\leq \frac{4}{4-\sqrt{15}} \|r+\nabla w\|_{L^2(\Omega)},\qquad
        \hbox{for every } w\in H^1(\Omega).
\end{equation}

Now, $\nabla r = W$, $|\nabla r|^2 =2\alpha^2$, so that
\begin{equation}
   \label{eq:korn29bis}
    \int_\Omega |\nabla r|^2 \leq 8\alpha^2\pi M_1^2\rho_0^2.
\end{equation}

Since
$|W(x-x_0)|^2=\alpha^2|x-x_0|^2$,
by\eqref{eq:korn29bis}, we have
\begin{equation}
   \label{eq:korn29ter}
    \int_\Omega|r|^2=\alpha^2\int_\Omega|x-x_0|^2\geq
      \frac{\pi}{2}\alpha^2s_0^4\rho_0^4
        \geq\left(\frac{s_0^2}{4M_1}\right)^2\rho_0^2\int_\Omega|\nabla r|^2.
\end{equation}

By \eqref{eq:korn8},  \eqref{eq:korn29} and \eqref{eq:korn29ter},
\begin{multline}
  \label{eq:korn30}
    \|\nabla \varphi\|_{L^2(\Omega)}\leq \|\nabla (\varphi-r)\|_{L^2(\Omega)} +
        \|\nabla r\|_{L^2(\Omega)}\leq \\
        \leq C\left(\|\nabla (\varphi-r)\|_{L^2(\Omega)}+
        \frac{1}{\rho_0}\|r\|_{L^2(\Omega)}\right)
        \leq C\left(\|\nabla (\varphi-r)\|_{L^2(\Omega)}+
        \frac{1}{\rho_0}\|r +\nabla w\|_{L^2(\Omega)}\right)\leq\\
        \leq
        C\left(\|\nabla (\varphi-r)\|_{L^2(\Omega)}+
        \frac{1}{\rho_0}\|\varphi +\nabla w\|_{L^2(\Omega)}+\frac{1}{\rho_0}
        \|\varphi -r\|_{L^2(\Omega)} \right)\leq\\
        \leq C\left(\|\widehat{\nabla} \varphi\|_{L^2(\Omega)}+
        \frac{1}{\rho_0}\|\varphi +\nabla w\|_{L^2(\Omega)}\right),
\end{multline}
with $C$ only depending on $M_0$, $M_1$ and $s_0$.

Notice that a more accurate estimate can be obtained by replacing $B_{\frac{s_0\rho_0}{2}}(x_0)$ with $B_{s_0\rho_0}(x_0)$ in \eqref{eq:korn24} and in what follows, obtaining
\begin{equation}
  \label{eq:korn31}
    \|\nabla \overline{w}\|_{L^2(\Omega)}\leq  \sqrt \gamma\| r\|_{L^2(\Omega)},
\end{equation}
where the constant $\gamma$,
\begin{equation}
  \label{eq:korn32}
    \gamma = \frac{\int_{\Omega\setminus B_{s_0\rho_0}(x_0)}|r|^2}{\int_\Omega |r|^2}
        = 1 - \frac{\frac{\pi}{2}s_0^4\rho_0^4}{\int_\Omega |x-x_0|^2}<1
\end{equation}
can be easily estimated in terms of the geometry of $\Omega$.
\end{proof}

\begin{rem}
  \label{rem:Gobert}
Let us notice that, choosing in particular $w\equiv 0$ in \eqref{eq:korn17}, it follows that there exists a positive constant $C$ only depending on $M_0$, $M_1$ and $s_0$, such that
for every $u\in H^1(\Omega,\R^2)$,

\begin{equation}
  \label{eq:Gobert}
    \|u\|_{H^1(\Omega)}\leq C(\rho_0\|\widehat{\nabla} u\|_{L^2(\Omega)}+\|u\|_{L^2(\Omega)}).
\end{equation}
The above inequality was first proved by Gobert in \cite{G62} by using the theory of singular integrals, a different proof for regular domains being presented by Duvaut and Lions in \cite{DL76}.
\end{rem}

\section{The Neumann problem} \label{sec:direct}
Let us consider a plate $\Omega \times \left [ - \frac{h}{2},
\frac{h}{2} \right ]$ with middle surface represented by a bounded
domain $\Omega$ in $\R^2$ having uniform thickness $h$, subject to
a transversal force field $\overline{Q}$ and to a couple field
$\overline{M}$ acting on its boundary. Under the kinematic
assumptions of Reissner-Mindlin's theory, the pair $(\varphi,w)$,
with $\varphi = (\varphi_1,\varphi_2)$, where $\varphi_\alpha$,
$\alpha=1,2$, are expressed in terms of the infinitesimal rigid
rotation field $\omega$ by \eqref{eq:anto-1.3} and $w$ is the
transversal displacement, satisfies the equilibrium problem
\eqref{eq:intro-1}-\eqref{eq:intro-4}. The shearing matrix $S \in
L^\infty(\Omega,   {\cal L} ({\M}^{2}))$ and the bending tensor
${\mathbb P}\in L^\infty(\Omega,   {\cal L} ({\M}^{2},
{\M}^{2}))$, introduced in Section \ref{sec:model}, are assumed to
satisfy the symmetry conditions \eqref{eq:anto-11.1},
\eqref{eq:anto-11.2} and the ellipticity conditions
\eqref{eq:convex-S}, \eqref{eq:convex-P}, respectively.

Summing up the weak formulation of equations \eqref{eq:intro-1} and \eqref{eq:intro-2}, one derives the following \textit{weak formulation} of the equilibrium problem \eqref{eq:intro-1}-\eqref{eq:intro-4}:

\medskip
\textit{A pair} $(\varphi,w)\in H^1(\Omega, \R^2)\times
H^1(\Omega)$ \textit{is a weak solution to} \eqref{eq:intro-1}-\eqref{eq:intro-4} \textit{if}
\textit{for every} $\psi\in H^1(\Omega, \R^2)$ \textit{and for every} $v\in H^1(\Omega)$,
\begin{equation}
  \label{eq:dir1}
    \int_\Omega {\mathbb P}\nabla \varphi\cdot \nabla \psi + \int_\Omega S(\varphi+\nabla w)\cdot
        (\psi+\nabla v)=\int_{\partial\Omega}\overline{Q} v + \overline{M}\cdot \psi.
\end{equation}
Choosing $\psi\equiv 0$, $v\equiv 1$,  in \eqref{eq:dir1}, we have
\begin{equation}
  \label{eq:dir2}
    \int_{\partial\Omega}\overline{Q}=0.
\end{equation}
Inserting $\psi\equiv -b$, $v=b\cdot x$ in \eqref{eq:dir1}, we have
\begin{equation*}
    \int_{\partial\Omega}b\cdot(\overline{Q}x-\overline{M})=0, \qquad \hbox{for every }b\in \R^2,
\end{equation*}
so that
\begin{equation}
  \label{eq:dir3}
    \int_{\partial\Omega}\overline{Q}x-\overline{M}=0.
\end{equation}
We refer to \eqref{eq:dir2}-\eqref{eq:dir3} as the \textit{compatibility conditions} for the equilibrium problem.
\begin{rem}
    \label{rem:all_sol}
Given a solution $(\varphi,w)$  to the equilibrium problem \eqref{eq:intro-1}-\eqref{eq:intro-4}, then all its solutions are given by
\begin{equation}
  \label{eq:dir4}
    \varphi^* = \varphi-b,\quad w^* = w +b\cdot x + a,\qquad \forall a\in \R, \forall b\in \R^2.
\end{equation}
It is obvious that any $(\varphi^*,w^*)$ given by   \eqref{eq:dir4} is a solution. Viceversa, given two solutions $(\varphi,w)$, $(\varphi^*,w^*)$, by subtracting their weak formulations one has
\begin{multline*}
    \int_\Omega {\mathbb P}\nabla (\varphi-\varphi^*)\cdot \nabla \psi + \int_\Omega S((\varphi-\varphi^*)+\nabla (w-w^*))\cdot
        (\psi+\nabla v)=0,\\
        \quad \forall v\in H^1(\Omega),
        \forall \psi\in H^1(\Omega, \R^2).
\end{multline*}
Choosing $\psi=\varphi-\varphi^*$, $v=w-w^*$, and by the ellipticity conditions
\eqref{eq:convex-S}, \eqref{eq:convex-P}, we have
\begin{multline}
    \label{eq:dir5}
    0=\int_\Omega {\mathbb P}\nabla (\varphi-\varphi^*)\cdot \nabla (\varphi-\varphi^*) + \int_\Omega S((\varphi-\varphi^*)+\nabla (w-w^*))\cdot
        ((\varphi-\varphi^*)+\nabla (w-w^*))\geq\\
        \geq\frac{h^3}{12}\xi_0\int_\Omega |\widehat{\nabla}  (\varphi-\varphi^*)|^2+h\sigma_0
        \int_\Omega |(\varphi-\varphi^*)+\nabla (w-w^*)|^2.
\end{multline}
{}From the generalized Korn inequality \eqref{eq:korn17} it follows that
$\nabla (\varphi-\varphi^*)=0$, so that there exists $b\in\R^2$ such that
$\varphi^* = \varphi-b$. By the above inequality we also have that $\nabla (w^*-w)=
\varphi-\varphi^* = b$, and therefore there exists $a\in\R$ such that
$w^* = w +b\cdot x + a$.

An alternative proof of $\nabla (\varphi-\varphi^*)=0$, that
better enlightens the mathematical aspects of the Reissner-Mindlin
model, is based on a qualitative argument which avoids the use of
\eqref{eq:korn17}. Precisely, {}from   \eqref{eq:dir5}, one has
that $\widehat{\nabla}  (\varphi-\varphi^*)=0$ and $\nabla
(w-w^*)= \varphi^*-\varphi$. Therefore $\varphi-\varphi^*=Wx+b$
for some skew symmetric matrix $W=
        \left(
 \begin{array}{cc}
   0&\alpha\\
   -\alpha & 0
  \end{array}
\right) $ and some constant $b\in\R^2$ and $\nabla (w^*-w)=Wx+b\in
\C^\infty$. Hence we can compute $(w^*-w)_{x_1x_2}=(\alpha
x_2+b_1)_{x_2} =\alpha$, $(w^*-w)_{x_2x_1}=(-\alpha x_1+b_2)_{x_1}
=-\alpha$ and, by the Schwarz theorem, $\alpha=0$, so that
$\varphi-\varphi^*=b$.
\end{rem}
\begin{prop}
   \label{prop:diretto}
  Let $\Omega$ be a bounded domain in $\R^2$ with boundary
of Lipschitz class with  constants $\rho_0$, $M_0$, satisfying
\eqref{eq:korn1}-\eqref{eq:korn2}. Let the second order tensor $S
\in L^\infty(\Omega,   {\cal L} ({\M}^{2}))$ and the forth order
tensor ${\mathbb P}\in L^\infty(\Omega,   {\cal L} ({\M}^{2},
{\M}^{2}))$ satisfy the symmetry conditions \eqref{eq:anto-11.1},
\eqref{eq:anto-11.2} and the ellipticity conditions
\eqref{eq:convex-S}, \eqref{eq:convex-P}, respectively. Let
$\overline{M}\in H^{-\frac{1}{2}}(\partial\Omega, \R^2)$ and
$\overline{Q}\in H^{-\frac{1}{2}}(\partial\Omega)$ satisfy the
compatibility conditions \eqref{eq:dir2}-\eqref{eq:dir3}
respectively. Problem \eqref{eq:intro-1}-\eqref{eq:intro-4} admits
a unique solution $(\varphi,w)\in H^1(\Omega,\R^2)\times
H^1(\Omega)$ normalized by the conditions
\begin{equation}
  \label{eq:dir6}
    \int_\Omega \varphi=0, \qquad \int_\Omega w =0.
\end{equation}
Moreover
\begin{equation}
  \label{eq:dir7}
    \|\varphi\|_{H^1(\Omega)} + \frac{1}{\rho_0}\|w\|_{H^1(\Omega)}
        \leq C\left(\|\overline{M}\|_{H^{-\frac{1}{2}}(\partial\Omega)}+\rho_0
        \|\overline{Q}\|_{H^{-\frac{1}{2}}(\partial\Omega)}\right),
\end{equation}
with $C$ only depending on $M_0$, $M_1$, $s_0$, $\xi_0$, $\sigma_0$, $\frac{\rho_0}{h}$.
\end{prop}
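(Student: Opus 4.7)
The plan is to apply the Lax--Milgram theorem to the bilinear form $a(\cdot,\cdot)$ on the closed subspace
\[
\mathcal{H}=\left\{(\psi,v)\in H^1(\Omega,\R^2)\times H^1(\Omega)\ \Big|\ \int_\Omega \psi=0,\ \int_\Omega v=0\right\},
\]
equipped with the weighted norm $\|(\psi,v)\|:=\|\psi\|_{H^1(\Omega)}+\frac{1}{\rho_0}\|v\|_{H^1(\Omega)}$, and then extend the resulting identity (\ref{eq:dir1}) to arbitrary test functions using the compatibility conditions. Continuity of $a$ on $H^1\times H^1$ is immediate from (\ref{eq:anto-11.4})--(\ref{eq:anto-11.5}), and continuity of the right-hand side functional $\mathcal{F}(\psi,v):=\int_{\partial\Omega}\overline{Q}v+\overline{M}\cdot\psi$ follows from the trace theorem and the $H^{-1/2}$ duality pairings.

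The central step is coercivity of $a$ on $\mathcal{H}$. By the ellipticity bounds (\ref{eq:convex-S})--(\ref{eq:convex-P}),
\[
a((\psi,v),(\psi,v))\geq \frac{h^3}{12}\xi_0\,\|\widehat{\nabla}\psi\|_{L^2(\Omega)}^2+h\sigma_0\,\|\psi+\nabla v\|_{L^2(\Omega)}^2.
\]
I would first invoke the generalized Korn inequality (\ref{eq:korn17}) to convert this into a bound on $\|\nabla\psi\|_{L^2(\Omega)}$; then Poincar\'e (\ref{eq:korn5}) applied to $\psi$ (admissible since $\int_\Omega\psi=0$) controls $\|\psi\|_{H^1(\Omega)}$; finally the triangle inequality $\|\nabla v\|_{L^2}\leq\|\psi+\nabla v\|_{L^2}+\|\psi\|_{L^2}$ together with Poincar\'e applied to $v$ produces the bound on $\frac{1}{\rho_0}\|v\|_{H^1(\Omega)}$. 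Chaining these yields $a((\psi,v),(\psi,v))\geq c_0\|(\psi,v)\|^2$ with $c_0$ of the stated dependence, and Lax--Milgram provides a unique $(\varphi,w)\in\mathcal{H}$ satisfying (\ref{eq:dir1}) for every $(\psi,v)\in\mathcal{H}$.

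To promote this to a weak solution against arbitrary test functions, note that $a$ is symmetric and positive semidefinite, so by Cauchy--Schwarz every element of its kernel is $a$-orthogonal to the entire space. The kernel, as computed in Remark \ref{rem:all_sol}, consists of pairs $(-b,b\cdot x+\kappa)$ with $b\in\R^2$, $\kappa\in\R$. Given any $(\psi,v)\in H^1\times H^1$, an appropriate choice of $b$ and $\kappa$ (determined by $\psi_\Omega$, $v_\Omega$ and $x_\Omega$) decomposes $(\psi,v)$ as the sum of a kernel element and an element of $\mathcal{H}$. On the kernel component the left-hand side of (\ref{eq:dir1}) vanishes by $a$-orthogonality, and the right-hand side vanishes precisely by the compatibility conditions (\ref{eq:dir2})--(\ref{eq:dir3}); on the $\mathcal{H}$ component the identity holds by construction. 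Combined with the description of all solutions in Remark \ref{rem:all_sol}, the normalization (\ref{eq:dir6}) selects the unique solution.

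For the stability estimate (\ref{eq:dir7}), I would test (\ref{eq:dir1}) with $(\psi,v)=(\varphi,w)$. Coercivity bounds the left-hand side below by $c_0\bigl(\|\varphi\|_{H^1(\Omega)}+\frac{1}{\rho_0}\|w\|_{H^1(\Omega)}\bigr)^2$, while the trace theorem bounds the right-hand side above by
\[
\bigl(\|\overline{M}\|_{H^{-1/2}(\partial\Omega)}+\rho_0\|\overline{Q}\|_{H^{-1/2}(\partial\Omega)}\bigr)\Bigl(\|\varphi\|_{H^1(\Omega)}+\tfrac{1}{\rho_0}\|w\|_{H^1(\Omega)}\Bigr),
\]
where the extra $\rho_0$ on the $\overline{Q}$ term merely reflects pairing with the weighted norm. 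Dividing by the common factor gives (\ref{eq:dir7}). I expect the coercivity step to be the main obstacle: the quadratic form controls only $\widehat{\nabla}\varphi$ and the coupled combination $\varphi+\nabla w$, not $\nabla\varphi$ and $\nabla w$ separately, and this is exactly the deficit the nonstandard Theorem \ref{theo:Korn_gener2} was designed to remedy.
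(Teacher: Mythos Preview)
Your proposal is correct and follows essentially the same approach as the paper: coercivity on $\mathcal{H}$ via ellipticity plus Theorem~\ref{theo:Korn_gener2} and Poincar\'e, then Lax--Milgram (the paper uses Riesz, which is equivalent here), followed by extension to general test functions and the stability bound. The only cosmetic difference is in the extension step: the paper directly defines modified test functions $\widetilde{\psi}=\psi-\psi_\Omega$, $\widetilde{v}=v+\psi_\Omega\cdot(x-x_\Omega)-v_\Omega$ lying in $\mathcal{H}$ and observes that $a$ is unchanged since $\nabla\widetilde{\psi}=\nabla\psi$ and $\widetilde{\psi}+\nabla\widetilde{v}=\psi+\nabla v$, whereas you frame this as a kernel-plus-$\mathcal{H}$ decomposition---but these are literally the same decomposition, and your $a$-orthogonality argument is just the observation that kernel elements have $\nabla\psi=0$ and $\psi+\nabla v=0$.
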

\begin{proof}
Let us consider the linear space
\begin{equation}
  \label{eq:dir8}
    {\mathcal H} =\left\{(\psi,v)\in H^1(\Omega,\R^2)\times H^1(\Omega)\ |\ \int_\Omega \psi =0, \int_\Omega v =0\right\},
\end{equation}
which is a Banach space equipped with the norm
\begin{equation}
  \label{eq:dir9}
   \|(\psi,v)\|_{\mathcal H} = \|\psi\|_ {H^1(\Omega)} + \frac{1}{\rho_0}\|v\|_{H^1(\Omega)}.
\end{equation}
The symmetric bilinear form
\begin{equation*}
    a: {\mathcal H}\times {\mathcal H}\rightarrow \R
\end{equation*}
\begin{equation}
   \label{eq:dir9bis}
    a((\varphi,w),(\psi,v))=\int_\Omega {\mathbb P}\nabla\varphi\cdot\nabla \psi +S(\varphi+\nabla w)\cdot(\psi+\nabla v),
\end{equation}
is continuous in ${\mathcal H}\times {\mathcal H}$. Let us see that it is also coercive. By the
ellipticity conditions \eqref{eq:convex-S}, \eqref{eq:convex-P},
\begin{multline}
  \label{eq:dir10}
   a((\varphi,w),(\varphi,w))\geq \frac{h^3}{12}\xi_0 \int_\Omega |\widehat{\nabla }\varphi|^2
    + h\sigma_0 \int_\Omega |\varphi +\nabla w|^2\geq \\
    \geq h^3\min\left\{\frac{\xi_0}{12},\sigma_0\left(\frac{\rho_0}{h}\right)^2\right\}\left(\int_\Omega |\widehat{\nabla }\varphi|^2
    + \frac{1}{\rho_0^2}\int_\Omega |\varphi +\nabla w|^2\right).
\end{multline}
On the other hand, {}from Poincar\'e and Korn inequalities \eqref{eq:korn5} and \eqref{eq:korn17}, and by the trivial estimate $\|\nabla w\|_{L^2(\Omega)}\leq \|\varphi + \nabla w\|_{L^2(\Omega)}
+\|\varphi\|_{L^2(\Omega)}$, one has
\begin{equation}
  \label{eq:dir11}
   \|(\varphi,w)\|_{\mathcal H}\leq C\left(\rho_0\|\widehat{\nabla} \varphi\|_{L^2(\Omega)} +
    \|\varphi + \nabla w\|_{L^2(\Omega)}\right),
\end{equation}
with $C$ only depending on $M_0$, $M_1$ and $s_0$.

{}From \eqref{eq:dir10}-\eqref{eq:dir11}, one has
\begin{equation}
  \label{eq:dir12}
      a((\varphi,w),(\varphi,w))\geq C\|(\varphi,w)\|_{\mathcal H}^2,
\end{equation}
where $C$ only depends on $M_0$, $M_1$, $s_0$, $\xi_0$, $\sigma_0$, $\frac{\rho_0}{h}$.

Therefore the bilinear form \eqref{eq:dir9bis} is a scalar product
inducing an equivalent norm in ${\mathcal H}$, which we denote by $\lvert\lvert\lvert\cdot \rvert\rvert\rvert$.

The linear functional
\begin{equation*}
    F: {\mathcal H}\rightarrow \R
\end{equation*}
\begin{equation*}
    F(\psi,v)=\int_{\partial\Omega} {\widehat Q} v+ {\widehat M}\cdot \psi
\end{equation*}
is bounded and, by \eqref{eq:dir12}, it satisfies
\begin{multline}
  \label{eq:dir13}
      |F(\psi,v)|\leq C\left(\|\overline{M}\|_{H^{-\frac{1}{2}}(\partial\Omega)}\|\psi\|_
        {H^{\frac{1}{2}}(\partial\Omega)}+
        \|\overline{Q}\|_{H^{-\frac{1}{2}}(\partial\Omega)}\|v\|_
        {H^{\frac{1}{2}}(\partial\Omega)}\right)\leq\\
        \leq C\left(\|\overline{M}\|_{H^{-\frac{1}{2}}(\partial\Omega)}+\rho_0
        \|\overline{Q}\|_{H^{-\frac{1}{2}}(\partial\Omega)}\right)\|(\psi,v)\|_{\mathcal H}\leq\\
        \leq
        C\left(\|\overline{M}\|_{H^{-\frac{1}{2}}(\partial\Omega)}+\rho_0
        \|\overline{Q}\|_{H^{-\frac{1}{2}}(\partial\Omega)}\right)\lvert\lvert\lvert(\psi,v)
        \rvert\rvert\rvert,
\end{multline}
so that
\begin{equation}
  \label{eq:dir14}
      \lvert\lvert\lvert F
        \rvert\rvert\rvert_*\leq C\left(\|\overline{M}\|_{H^{-\frac{1}{2}}(\partial\Omega)}+\rho_0
        \|\overline{Q}\|_{H^{-\frac{1}{2}}(\partial\Omega)}\right),
\end{equation}
with $C$ only depending on $M_0$, $M_1$, $s_0$, $\xi_0$, $\sigma_0$, $\frac{\rho_0}{h}$.
By the Riesz representation theorem, there exists a unique $(\varphi,w)\in {\mathcal H}$
such that $a((\varphi,w),(\psi,v)) = F(\psi,v)$ for every $(\psi,v)\in {\mathcal H}$, that is
\eqref{eq:dir1} holds for every $(\psi,v)\in {\mathcal H}$. Moreover
\begin{equation}
  \label{eq:dir15}
      \lvert\lvert\lvert (\varphi,w)
        \rvert\rvert\rvert = \lvert\lvert\lvert F
        \rvert\rvert\rvert_*.
\end{equation}
Let us prove \eqref{eq:dir1} for every $(\psi,v)\in  H^1(\Omega,\R^2)\times H^1(\Omega)$.
Given any $\psi \in  H^1(\Omega,\R^2)$ and any $v\in H^1(\Omega)$, let
\begin{equation*}
      \widetilde{\psi} = \psi-\psi_\Omega, \qquad \widetilde{v} = v+\psi_\Omega\cdot(x-x_\Omega)
        -v_\Omega.
\end{equation*}
We have that $\widetilde{\psi}+ \nabla \widetilde{v} =\psi+\nabla v$.
Hence, by the compatibility conditions \eqref{eq:dir2}-\eqref{eq:dir3},
\begin{multline}
  \label{eq:dir16}
      \int_\Omega {\mathbb P}\nabla \varphi\cdot\nabla \psi + S(\varphi+\nabla w)\cdot(\psi+\nabla v) =
        \int_{\partial\Omega}\overline{M}\cdot\widetilde{\psi}+\overline{Q}\widetilde{v}=\\
        =\int_{\partial\Omega}\overline{M}\cdot\psi+\overline{Q} v -
        \psi_\Omega\cdot\int_{\partial\Omega}(\overline{M}-\overline{Q}x)-v_\Omega
        \int_{\partial\Omega}\overline{Q} - \psi_\Omega\cdot x_\Omega\int_{\partial\Omega}\overline{Q}
        = \int_{\partial\Omega}\overline{M}\cdot\psi+\overline{Q} v.
\end{multline}
Finally, \eqref{eq:dir7} follows {}from \eqref{eq:dir12}, \eqref{eq:dir14} and \eqref{eq:dir15}.
\end{proof}

\section{$H^2$ regularity} \label{sec:reg}

Our main result is the following global regularity theorem.

\begin{theo}
   \label{theo:global-reg}

Let $\Omega$ be a bounded domain in $\R^2$ with boundary of class
$C^{1,1}$, with constants $\rho_0$, $M_0$, satisfying
\eqref{eq:korn1}, \eqref{eq:korn2}. Let $S \in
C^{0,1}(\overline{\Omega}, {\cal L} ({\M}^{2}))$ and $\mathbb P
\in C^{0,1} (\overline{\Omega}, {\cal L} ({\M}^{2}, {\M}^{2}))$
satisfy the symmetry conditions \eqref{eq:anto-11.1},
\eqref{eq:anto-11.2} and the ellipticity conditions
\eqref{eq:convex-S}, \eqref{eq:convex-P}. Let $\overline{M} \in
H^{ \frac{1}{2}} (\partial \Omega, \R^2)$ and $\overline{Q} \in
H^{ \frac{1}{2}} (\partial \Omega)$ satisfy the compatibility
conditions \eqref{eq:dir2}, \eqref{eq:dir3}, respectively. Then,
the weak solution $(\varphi, w) \in H^1(\Omega, \R^2) \times
H^1(\Omega)$ of the problem
\eqref{eq:intro-1}--\eqref{eq:intro-4}, normalized by the
conditions \eqref{eq:dir6}, is such that $(\varphi, w) \in
H^2(\Omega, \R^2) \times H^2(\Omega)$ and
\begin{equation}
  \label{eq:reg-1-1}
      \|\varphi\|_{H^2(\Omega)} + \frac{1}{\rho_0}
      \|w\|_{H^2(\Omega)} \leq C \left (
      \|\overline{M}\|_{H^{\frac{1}{2}} (\partial \Omega)} + \rho_0
      \|\overline{Q}\|_{H^{\frac{1}{2}} (\partial \Omega)} \right
      ),
\end{equation}
where the constant $C>0$ only depends on $M_0$, $M_1$, $s_0$,
$\xi_0$, $\sigma_0$, $ \frac{\rho_0}{h}$, $\|S\|_{
C^{0,1}(\overline{\Omega})}$ and $\|\mathbb P \|_{
C^{0,1}(\overline{\Omega})}$.
\end{theo}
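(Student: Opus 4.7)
The proof proceeds by a two-step bootstrap that partially decouples the Reissner-Mindlin system. Starting from the weak solution $(\varphi,w)\in H^1(\Omega,\R^2)\times H^1(\Omega)$ furnished by Proposition \ref{prop:diretto} with its $H^1$ estimate \eqref{eq:dir7}, I would rewrite \eqref{eq:intro-1}--\eqref{eq:intro-4} as two separate Neumann problems,
\begin{equation*}
\mathrm{div}(\mathbb{P}\nabla\varphi) = S(\varphi+\nabla w)\ \text{in }\Omega,\qquad (\mathbb{P}\nabla\varphi)\,n=\overline{M}\ \text{on }\partial\Omega,
\end{equation*}
\begin{equation*}
\mathrm{div}(S\nabla w) = -\mathrm{div}(S\varphi)\ \text{in }\Omega,\qquad S\nabla w\cdot n=\overline{Q}-S\varphi\cdot n\ \text{on }\partial\Omega,
\end{equation*}
and attack each via $H^2$ elliptic regularity up to the boundary, applied in this order so that the coupling terms are strictly of lower order at each stage.

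For the vector problem I would first verify that $\mathbb{P}$ is elliptic in the Legendre--Hadamard sense and that the natural boundary operator $\mathbb{P}\nabla\varphi\cdot n$ satisfies the complementing (Lopatinski) condition, exactly as for traction boundary conditions in planar linear elasticity. The interior estimate would then follow from the Nirenberg tangential difference quotient method, closed by the generalized Korn inequality of Theorem \ref{theo:Korn_gener2} to control $\|\nabla\nabla\varphi\|_{L^2}$. For the boundary estimate I would follow the Agmon--Campanato scheme: near each $P\in\partial\Omega$ use the $C^{1,1}$ chart of Definition \ref{def:2.1} to flatten the boundary; introduce a smooth cutoff of size $\sim\rho_0$; apply difference quotients in the tangential direction to the variational formulation of the $\varphi$-system; invoke Theorem \ref{theo:Korn_gener2} together with the quantitative Poincar\'e inequalities of \cite{A-M-R08} to bound the resulting tangential second derivatives by the data; and finally recover normal second derivatives pointwise from the PDE, thanks to the ellipticity \eqref{eq:convex-P} and the Lipschitz regularity of $\mathbb{P}$. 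A finite covering of $\partial\Omega$ by patches of radius $\sim\rho_0$, controlled through $M_0$, $M_1$, $s_0$, then yields
\begin{equation*}
\|\varphi\|_{H^2(\Omega)}\leq C\bigl(\|\overline{M}\|_{H^{1/2}(\partial\Omega)}+\|S(\varphi+\nabla w)\|_{L^2(\Omega)}+\|\varphi\|_{H^1(\Omega)}\bigr),
\end{equation*}
with $C$ depending on the admissible parameters.

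With $\varphi\in H^2(\Omega,\R^2)$ in hand, the scalar equation for $w$ has right-hand side $-\mathrm{div}(S\varphi)\in L^2(\Omega)$ (using the Lipschitz regularity of $S$) and Neumann datum $\overline{Q}-S\varphi\cdot n\in H^{1/2}(\partial\Omega)$ (via the trace of $\nabla\varphi\in H^1$). Since $S$ is pointwise positive definite by \eqref{eq:convex-S} with Lipschitz entries, classical scalar $H^2$ Neumann regularity on $C^{1,1}$ domains --- again implemented through the Agmon--Campanato approach and the Poincar\'e estimates of \cite{A-M-R08} --- yields $w\in H^2(\Omega)$ with an analogous bound. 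Combining the two estimates, balancing dimensions by appropriate powers of $\rho_0$, and absorbing the remaining $\|(\varphi,w)\|_{H^1}$ contribution via \eqref{eq:dir7}, produces \eqref{eq:reg-1-1}.

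The main obstacle is the quantitative nature of the boundary $H^2$ estimate for the vector $\varphi$-system. Ensuring that every constant depends only on the geometric parameters $M_0,M_1,s_0$ and on $\xi_0,\sigma_0,\rho_0/h,\|S\|_{C^{0,1}},\|\mathbb{P}\|_{C^{0,1}}$ --- and not on the global shape of $\Omega$ --- requires cutoffs carefully scaled with $\rho_0$, flattening diffeomorphisms whose $C^{1,1}$ norms are controlled by $M_0$, and a half-disk version of the generalized Korn inequality. Verifying the complementing condition uniformly in these parameters, and proving the corresponding localized coercivity estimate, are the delicate points that I would expect to be relegated to the Appendix referenced in the introduction.
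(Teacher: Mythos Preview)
Your decoupling strategy---treating the $\varphi$-equation as a Neumann elasticity problem with right-hand side $S(\varphi+\nabla w)\in L^2$, then the $w$-equation as a scalar Neumann problem---is a legitimate route and should work, but it is not what the paper does. The paper never separates the system: it covers $\Omega$ by an interior piece and finitely many boundary patches flattened to half-disks, and on each half-disk applies tangential difference quotients directly to the \emph{coupled} bilinear form $\widetilde a_+((\widetilde\varphi,\widetilde w),(\widetilde\psi,\widetilde v))$ (Theorem \ref{theo:reg-loc-bound}, proved in the Appendix). Coercivity after flattening is obtained not through the complementing condition but by pulling the $\widetilde{\mathbb P}$-term back to the original coordinates, where the strong convexity \eqref{eq:convex-P} together with the standard Korn inequality on $H^1_{\Gamma_\sigma^+}(B_\sigma^+,\R^2)$ (Theorem~5.7 of \cite{A-M-R08}) applies; the $\widetilde S$-term is handled by \eqref{eq:convex-S}. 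Normal second derivatives of both $\widetilde\varphi$ and $\widetilde w$ are then recovered algebraically from the two equations.

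Your approach buys modularity---you reduce to off-the-shelf $H^2$ Neumann regularity for planar elasticity and for a scalar divergence-form operator---at the cost of invoking the ADN/complementing machinery and of checking compatibility for each decoupled problem. The paper's energy-based approach avoids Lopatinski entirely and keeps the constants manifestly constructive. One inaccuracy in your write-up: once you have split off the $\varphi$-system, the relevant inequality is the \emph{standard} second Korn inequality (Proposition \ref{prop:Korn_classica}), not the generalized one of Theorem \ref{theo:Korn_gener2}, which couples $\varphi$ and $w$ and plays no role in a decoupled treatment.
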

\medskip

The proof of the theorem is mainly based on the approach to
regularity for second order elliptic systems adopted, for
instance, in \cite{Ag65} and \cite{Ca80}. For the sake of
completeness, the main steps of the proof are recalled in the
sequel.

Let us introduce the following notation. Let
\begin{equation}
  \label{eq:reg-2-1}
      B_\sigma^+ = \{(y_1,y_2)\in \R^2| \ y_1^2+y_2^2 < \sigma^2,
      \ y_2 >0\}
\end{equation}
be the hemidisk of radius $\sigma$, $\sigma >0$, and let
\begin{equation}
  \label{eq:reg-2-2}
      \Gamma_\sigma = \{(y_1,y_2)\in \R^2| \ -\sigma \leq y_1 \leq \sigma, \ y_2 =0\}
\end{equation}
and
\begin{equation}
  \label{eq:reg-2-3}
      \Gamma_\sigma^+ = \partial B_\sigma^+ \setminus
      \Gamma_\sigma
\end{equation}
be the flat and the curved portion of the boundary $\partial
B_\sigma^+$, respectively. Moreover, let
\begin{equation}
  \label{eq:reg-2-4}
      H_{\Gamma_\sigma^+}^1 (B_\sigma^+) = \{ g \in
      H^1(B_\sigma^+)| \ g=0 \ \hbox{on } \Gamma_\sigma^+\}.
\end{equation}

Without loss of generality, hereinafter we will assume $\rho_0=1$.
Moreover, the dependence of the constants $C$ on the plate
thickness $h$ will be not explicitly indicated in the estimates
below.

By the regularity of $\partial \Omega$, we can construct a finite
collection of open sets $\Omega_0$, $\{ \Omega_j \}_{j=1}^N$ such
that $\Omega = \Omega_0 \cup \left ( \bigcup_{j=1}^N {\cal
T}_{(j)}^{-1}(B_{ \frac{\sigma}{2}}^+) \right )$, $\Omega_0
\subset \Omega_{\delta_0}$, where $\Omega_{\delta_0}=\{x\in\Omega
\ | \ dist(x, \partial \Omega) > \delta_0\}$, $\delta_0>0$ only
depends on $M_0$, and $N$ only depends on $M_0$, $M_1$. Here,
${\cal T}_{(j)}$, $j=1,...,N$, is a homeomorphism of $C^{1,1}$
class which maps $\Omega_j$ into $B_\sigma$, $\Omega_j \cap
\Omega$ into $B_\sigma^+$, $\overline{\Omega}_j \cap \partial
\Omega$ into $\Gamma_\sigma$, and $\partial \Omega_j \cap \Omega$
into $\Gamma_\sigma^+$.

The estimate of $(\|\varphi\|_{H^2(\Omega_0)} +
\|w\|_{H^2(\Omega_0)})$ is a consequence of the following local
interior regularity result, whose proof can be obtained, for
example, by adapting the arguments illustrated in \cite{Ca80}.

\begin{theo}
\label{theo:reg-loc-inter}
Let us denote by $B_\sigma$ the open ball in $\R^2$ centered at
the origin and with radius $\sigma$, $\sigma >0$. Let $(\varphi,w)
\in H^1(B_\sigma,\R^2)\times H^1(B_\sigma)$ be such that
\begin{equation}
  \label{eq:reg-3-3}
      a((\varphi,w), (\psi,v))=0, \quad \hbox{for every }
      (\psi,v)\in H^1(B_\sigma,\R^2)\times H^1(B_\sigma),
\end{equation}
where
\begin{equation}
  \label{eq:reg-4-1}
      a((\varphi,w), (\psi,v))= \int_{B_\sigma} \mathbb P \nabla
      \varphi \cdot \nabla \psi + \int_{B_\sigma} S(\varphi
      +\nabla w)\cdot (\psi + \nabla v),
\end{equation}
with $\mathbb P \in C^{0,1}(\overline{B}_\sigma, {\cal L}
({\M}^2,{\M}^2))$, $S \in C^{0,1}(\overline{B}_\sigma, {\cal L}
({\M}^2))$ satisfying the symmetry conditions
\eqref{eq:anto-11.1}, \eqref{eq:anto-11.2} and the ellipticity
conditions \eqref{eq:convex-S}, \eqref{eq:convex-P}. Then,
$(\varphi,w) \in H^2(B_\sigma, \R^2) \times H^2(B_\sigma)$ and we
have
\begin{equation}
  \label{eq:reg-4-2}
      \|\varphi\|_{H^2( B_{  \frac{\sigma}{2}}  )} +
      \|w\|_{H^2( B_{  \frac{\sigma}{2}}  )} \leq C
      \left (
      \|\varphi\|_{H^1(B_\sigma)} + \|w\|_{H^1(B_\sigma)}
      \right ),
\end{equation}
where the constant $C>0$ only depends on $\xi_0$, $\sigma_0$,
$\|S\|_{ C^{0,1}(\overline{B}_\sigma)}$ and $\|\mathbb P \|_{
C^{0,1}(\overline{B}_\sigma)}$.
\end{theo}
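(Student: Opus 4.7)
The plan is to use Nirenberg's difference-quotient method, tailored to the coupled structure of the Reissner--Mindlin bilinear form. Fix a cutoff $\eta\in C_c^\infty(B_\sigma)$ with $\eta\equiv 1$ on $B_{\sigma/2}$ and $|\nabla\eta|\le C/\sigma$, and, for $k\in\{1,2\}$ and small $h\neq 0$, let $\tau_h u(x)=h^{-1}(u(x+he_k)-u(x))$. I would plug the admissible test pair $\psi=-\tau_{-h}(\eta^2\tau_h\varphi)$, $v=-\tau_{-h}(\eta^2\tau_h w)$ into \eqref{eq:reg-3-3} and use the discrete integration-by-parts identity together with the Leibniz-type rule $\tau_h(fg)=(\tau_h f)\,g(\cdot+he_k)+f\,\tau_h g$. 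This produces an identity of the schematic form
\begin{equation*}
\int_{B_\sigma}\mathbb P\,\nabla\tau_h\varphi\cdot\nabla(\eta^2\tau_h\varphi)+\int_{B_\sigma} S\,\tau_h(\varphi+\nabla w)\cdot\eta^2\,\tau_h(\varphi+\nabla w)=R_h,
\end{equation*}
where $R_h$ groups the commutator contributions coming from the translations of the coefficients $\mathbb P$ and $S$, which are controlled precisely because $\mathbb P,S\in C^{0,1}$.

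Next I would distribute the cutoff using $\nabla(\eta^2 u)=\eta^2\nabla u+2\eta(\nabla\eta\otimes u)$ together with the algebraic identity $\eta^2\,\mathbb P A\cdot A=\mathbb P(\eta A)\cdot(\eta A)$, rewriting the left-hand side as
\begin{equation*}
\int\mathbb P\,\nabla(\eta\tau_h\varphi)\cdot\nabla(\eta\tau_h\varphi)+\int S\,\bigl(\eta\tau_h(\varphi+\nabla w)\bigr)\cdot\bigl(\eta\tau_h(\varphi+\nabla w)\bigr)+L_h,
\end{equation*}
where $L_h$ collects cross terms involving $\nabla\eta$, each of them bilinear in a first-order quantity and a zeroth-order quantity built from $(\tau_h\varphi,\tau_h w)$. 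The ellipticity \eqref{eq:convex-P} bounds the first integral below by $c\,\|\widehat\nabla(\eta\tau_h\varphi)\|_{L^2}^2$; since $\eta\tau_h\varphi$ is compactly supported in $B_\sigma$, the classical first Korn inequality upgrades this to $c\,\|\nabla(\eta\tau_h\varphi)\|_{L^2}^2$. The ellipticity \eqref{eq:convex-S} bounds the second integral below by $c\,\|\eta\tau_h(\varphi+\nabla w)\|_{L^2}^2$, and combining this with Poincar\'e applied to the compactly supported $\eta\tau_h\varphi$ yields control of $\|\eta\nabla\tau_h w\|_{L^2}$ as well.

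The error terms $R_h$ and $L_h$ can be bounded by Cauchy--Schwarz together with the standard estimate $\|\tau_h u\|_{L^2(B_{\sigma'})}\le\|\nabla u\|_{L^2(B_\sigma)}$ (for $\sigma'<\sigma-|h|$), producing a majorant of the form $C\bigl(\|\varphi\|_{H^1(B_\sigma)}+\|w\|_{H^1(B_\sigma)}\bigr)\bigl(\|\nabla(\eta\tau_h\varphi)\|_{L^2}+\|\eta\nabla\tau_h w\|_{L^2}\bigr)$. Young's inequality then absorbs these contributions into the left-hand side, producing an $h$-uniform bound on $\|\nabla(\eta\tau_h\varphi)\|_{L^2}+\|\eta\nabla\tau_h w\|_{L^2}$, and the standard characterization of $H^2$ via uniform difference-quotient bounds delivers $(\varphi,w)\in H^2(B_{\sigma/2})\times H^2(B_{\sigma/2})$ together with \eqref{eq:reg-4-2}. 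The main subtlety is that the principal part of the system is not Legendre--Hadamard elliptic on the full pair $(\varphi,w)$: the leading quadratic form only controls $\widehat\nabla\varphi$ and the combination $\varphi+\nabla w$, so coercivity has to be recovered by the two-step route above, and the $\nabla\eta$ cross terms must be organized consistently with this degenerate structure so that nothing involving $\nabla\tau_h w$ ends up on the wrong side of the Young absorption.
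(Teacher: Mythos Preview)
Your approach is correct and is precisely the Nirenberg--Campanato difference-quotient method that the paper invokes: the paper does not give a detailed proof of this interior result but simply states that it ``can be obtained, for example, by adapting the arguments illustrated in \cite{Ca80}.'' Your organization --- test with $(-\tau_{-h}(\eta^2\tau_h\varphi),-\tau_{-h}(\eta^2\tau_h w))$, use the discrete Leibniz rule and the Lipschitz bound on $\mathbb P,S$ for the commutators, recover coercivity via the first Korn inequality on the compactly supported field $\eta\tau_h\varphi$, and then extract $\eta\nabla\tau_h w$ from the $S$-term --- is exactly parallel to the boundary proof the paper does carry out in the Appendix (there the cutoff is placed inside the difference quotient and Korn on $H^1_{\Gamma_1^+}$ replaces first Korn, but the structure is the same). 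One minor simplification: you don't need Poincar\'e to bound $\|\eta\tau_h\varphi\|_{L^2}$, since $\|\tau_h\varphi\|_{L^2}\le\|\nabla\varphi\|_{L^2(B_\sigma)}$ is already data.
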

In order to complete the proof of the regularity estimate, let us
control the quantity $(\|\varphi\|_{H^2(\Omega_j \cap \Omega)} +
\|w\|_{H^2(\Omega_j \cap \Omega)})$ for every $j \in \{1,...,N\}$.

For every $v \in H^1_{\partial \Omega_j \cap \Omega}(\Omega_j \cap
\Omega)$ and for every $\psi \in H^1_{\partial \Omega_j \cap
\Omega}(\Omega_j \cap \Omega, \R^2)$, the solution $(\varphi, w)$
to \eqref{eq:intro-1}--\eqref{eq:intro-4} satisfies the weak
formulation
\begin{multline}
  \label{eq:reg-4-3}
      \int_{\Omega_j \cap \Omega} \mathbb P(x) \nabla_x \varphi \cdot
      \nabla_x \psi dx + \int_{\Omega_j \cap \Omega} S(x) (\varphi
      + \nabla_x w) \cdot (\psi +\nabla_x v) dx =
      \\
      = \int_{\Omega_j
      \cap \partial \Omega} (\overline{Q}v + \overline{M}\cdot \psi)
      ds_x.
\end{multline}
Let us introduce the change of variables
\begin{equation}
  \label{eq:reg-5-1}
      y={\cal{T}}_{(j)}(x), \quad y \in B_\sigma^+,
\end{equation}
\begin{equation}
  \label{eq:reg-5-2}
      x={\cal{T}}_{(j)}^{-1}(y), \quad x \in \Omega_j \cap \Omega,
\end{equation}
and let us define
\begin{equation}
  \label{eq:reg-5-3}
      \widetilde{w}(y)=w({\cal{T}}_{(j)}^{-1}(y)), \quad
      \widetilde{\varphi}_r(y)=\varphi_r({\cal{T}}_{(j)}^{-1}(y)), \
      r=1,2,
\end{equation}
\begin{equation}
  \label{eq:reg-5-4}
      \widetilde{v}(y)=v({\cal{T}}_{(j)}^{-1}(y)), \quad
      \widetilde{\psi}_r(y)=\psi_r({\cal{T}}_{(j)}^{-1}(y)), \
      r=1,2.
\end{equation}
Then, the pair $(\widetilde{\varphi}, \widetilde{w}) \in
H^1(B_\sigma^+, \R^2) \times H^1(B_\sigma^+)$ satisfies
\begin{equation}
  \label{eq:reg-5-5}
      \widetilde{a}_+((\widetilde{\varphi}, \widetilde{w}),
      (\widetilde{\psi}, \widetilde{v}))=
      \widetilde{{F}}_+(\widetilde{\psi}, \widetilde{v}), \ \
      \hbox{for every } (\widetilde{\psi}, \widetilde{v}) \in
      H^1_{\Gamma_\sigma^+}(B_\sigma^+,\R^2)\times H^1_{\Gamma_\sigma^+}(B_\sigma^+),
\end{equation}
where
\begin{multline}
  \label{eq:reg-5-6}
    \widetilde{a}_+((\widetilde{\varphi}, \widetilde{w}),
      (\widetilde{\psi}, \widetilde{v}))= \\
      = \int_{B_\sigma^+} \widetilde{\mathbb
      P}(y) \nabla_y \widetilde{\varphi} \cdot \nabla_y
      \widetilde{\psi}dy +
    \int_{B_\sigma^+} \widetilde{S}(y)(\widetilde{\varphi}+ L^T
    \nabla_y \widetilde{w}) \cdot (\widetilde{\psi} + L^T \nabla_y
    \widetilde{v}) dy,
\end{multline}
\begin{equation}
  \label{eq:reg-5-7}
      \widetilde{F}_+( \widetilde{\psi}, \widetilde{v})=
      \int_{\Gamma_\sigma} (\widetilde{{\cal{Q}}} \widetilde{v} +
      \widetilde{{\cal{M}}}\cdot \widetilde{\psi})ds_y,
\end{equation}
with
\begin{equation}
  \label{eq:reg-5-8}
      (L)_{ks}= L_{ks}= \frac{\partial {\cal {T}}_k}{\partial
      x_s}, \quad k,s=1,2,
\end{equation}
\begin{equation}
  \label{eq:reg-5-9}
      \iota= | \det L |, \quad
      \iota^*=
      \sqrt{ \left (  \frac{\partial {{\cal{T}}^{-1}(y)} }{ \partial y } \right )^T   \frac{\partial {{\cal{T}}^{-1}(y)} }{ \partial y }
      \left |_{y_1,y_2=0} \right.   } ,
\end{equation}
\begin{equation}
  \label{eq:reg-5-10}
      (\widetilde{\mathbb P}(y))_{ilrk}=\widetilde{P}_{ilrk}(y)=
      \sum_{j,s=1}^2 P_{ijrs}(
      {\cal{T}}^{-1}(y))L_{ks}L_{lj} \iota^{-1}, \quad i,l,r,k=1,2,
\end{equation}
\begin{equation}
  \label{eq:reg-5-11}
      \widetilde{S}(y)=S({\cal{T}}^{-1}(y))\iota^{-1},
\end{equation}
\begin{equation}
  \label{eq:reg-6-1}
    \widetilde{{\cal{Q}}}(y)= \overline{Q}({\cal{T}}^{-1}(y))\iota^*,
    \quad
    \widetilde{{\cal{M}}}(y)=\overline{M}({\cal{T}}^{-1}(y))\iota^*.
\end{equation}
Since $L \in C^{0,1}(\Omega_j \cap \Omega, \M^2)$ is nonsingular
and there exist two constants $c_1$, $c_2$, only depending on
$M_0$, such that $ 0 < c_1 \leq \iota, \iota^*  \leq c_2$ in
$\Omega_j$, the fourth order tensor $\widetilde{\mathbb P}$ in
\eqref{eq:reg-5-10} has the following properties:
\begin{itemize}
\item[i)] (major symmetry) for every $2 \times 2$ matrices $A$ and
$B$ we have
\begin{equation}
    \label{eq:reg-Ptilde-sym}
    \widetilde{\mathbb P} A \cdot B= A \cdot
\widetilde{\mathbb P} B;
\end{equation}
\item[ii)] (strong ellipticity) there exists a constant
$\kappa_0$, $\kappa_0 >0$ and $\kappa_0$ only depending on $M_0$
and $\xi_0$, such that for every pair of vectors $a$, $b \in \R^2$
and for every $y \in \overline{B}_\sigma^+$ we have
\begin{equation}
    \label{eq:reg-Ptilde-strell}
    \widetilde{\mathbb P}(a \otimes b) \cdot (a \otimes b) \geq
    \kappa_0 |a|^2|b|^2;
\end{equation}
\item[iii)] (regularity) $\widetilde{\mathbb P} \in
C^{0,1}(\overline{B}_\sigma^+, {\cal L} ({\M}^{2}, {\M}^{2}))$.
\end{itemize}
The matrix $\widetilde{S}$ defined in \eqref{eq:reg-5-11} is
symmetric and there exists a constant $\chi_0$, $\chi_0
>0 $ only depending on $\sigma_0$ and $M_0$, such that for every
vector $a \in \R^2$ and for every $y \in \overline{B}_\sigma^+$ we
have
\begin{equation}
    \label{eq:reg-6-2}
    \widetilde{S}a \cdot a \geq \chi_0 |a|^2.
\end{equation}
Moreover, $\widetilde{S} \in C^{0,1}(B_\sigma^+, {\M}^2)$.

\medskip

We now use the regularity up to the flat boundary of the hemidisk
$B_1^+$ stated in the next theorem, whose proof is postponed in
the Appendix.

\begin{theo}
\label{theo:reg-loc-bound}
Under the above notation and assumptions, let
$(\widetilde{\varphi}, \widetilde{w}) \in H^1(B_\sigma^+,\R^2)
\times H^1(B_\sigma^+)$ defined in \eqref{eq:reg-5-3} be the
solution to \eqref{eq:reg-5-5}. Then $ (\widetilde{\varphi},
\widetilde{w}) \in H^2(B_{ \frac{\sigma}{2}  }^+,\R^2) \times
H^2(B_{ \frac{\sigma}{2} }^+)$ and we have
\begin{equation}
    \label{eq:reg-7-1}
    \|\widetilde{\varphi}\|_{H^2(B_{ \frac{\sigma}{2}  }^+)}+ \|\widetilde{w}\|_{H^2(B_{ \frac{\sigma}{2}  }^+)}
    \leq
    C \left (
    \| \widetilde{{\cal{Q}}}\|_{H^{ \frac{1}{2}  }(\Gamma_\sigma)} + \| \widetilde{{\cal{M}}}\|_{H^{ \frac{1}{2}
    }(\Gamma_\sigma)}+
    \|\widetilde{\varphi}\|_{H^1(B_\sigma^+)}+ \|\widetilde{w}\|_{H^1(B_\sigma^+)}
    \right ),
\end{equation}
where the constant $C>0$ only depends on $M_0$, $\xi_0$,
$\sigma_0$, $\|S\|_{C^{0,1}(\overline{\Omega})}$ and
$\|P\|_{C^{0,1}(\overline{\Omega})}$.
\end{theo}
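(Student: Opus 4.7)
The plan is to combine Nirenberg's method of tangential difference quotients with algebraic recovery of the normal second derivatives from the strong form of the system. Fix a cutoff $\eta\in C_c^\infty(B_\sigma)$ with $\eta\equiv 1$ on $B_{\sigma/2}$ and $\mathrm{supp}\,\eta\cap\partial B_\sigma^+\subset\Gamma_\sigma$. For $|h|$ small, introduce the tangential shift $\tau_h f(y_1,y_2)=f(y_1+h,y_2)$ and the difference quotient $D_1^h f=(\tau_h f-f)/h$. I would first show that $D_1^h\widetilde\varphi$ and $D_1^h\widetilde w$ remain uniformly bounded in $H^1(B_{\sigma/2}^+)$ as $h\to 0$, which yields the $L^2$ bounds on all second derivatives involving at least one $y_1$-differentiation, and then use the strong equations to express $\partial_{y_2}^2\widetilde\varphi$ and $\partial_{y_2}^2\widetilde w$ algebraically in terms of derivatives already controlled.

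For the tangential step, I would test the weak formulation \eqref{eq:reg-5-5} with $\widetilde\psi=-D_1^{-h}(\eta^2 D_1^h\widetilde\varphi)$ and $\widetilde v=-D_1^{-h}(\eta^2 D_1^h\widetilde w)$, which belong to $H^1_{\Gamma_\sigma^+}(B_\sigma^+)$ by construction. Using the discrete integration-by-parts identity $\int f\,D_1^{-h}g=-\int D_1^h f\cdot g$, together with the discrete Leibniz rule applied to the $C^{0,1}$ coefficients $\widetilde{\mathbb P}$, $\widetilde S$ and $L$ (whose tangential differences are uniformly bounded in $L^\infty$), the resulting left-hand side equals $\widetilde a_+((\eta D_1^h\widetilde\varphi,\eta D_1^h\widetilde w),(\eta D_1^h\widetilde\varphi,\eta D_1^h\widetilde w))$ evaluated with the shifted coefficients $\widetilde{\mathbb P}(\cdot+he_1)$, $\widetilde S(\cdot+he_1)$, up to commutator and cutoff-derivative terms, all of which are controlled by $(\|\widetilde\varphi\|_{H^1(B_\sigma^+)}+\|\widetilde w\|_{H^1(B_\sigma^+)})(\|\eta D_1^h\widetilde\varphi\|_{H^1}+\|\eta D_1^h\widetilde w\|_{H^1})$. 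The boundary integral $-\int_{\Gamma_\sigma}(\widetilde{\mathcal Q}\,D_1^{-h}(\eta^2 D_1^h\widetilde w)+\widetilde{\mathcal M}\cdot D_1^{-h}(\eta^2 D_1^h\widetilde\varphi))\,ds_y$ is transferred, by a further discrete integration by parts along $\Gamma_\sigma$, into a duality pairing of $D_1^h\widetilde{\mathcal M}$ and $D_1^h\widetilde{\mathcal Q}$, which are uniformly bounded in $H^{-1/2}(\Gamma_\sigma)$ by the $H^{1/2}$-norms of the data, against the traces of $\eta^2 D_1^h\widetilde\varphi$, $\eta^2 D_1^h\widetilde w$, which the trace theorem bounds by the $H^1(B_\sigma^+)$-norms of the same functions. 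Invoking the Legendre-Hadamard ellipticity \eqref{eq:reg-Ptilde-strell}, the shear positivity \eqref{eq:reg-6-2}, and a generalized Korn inequality in the spirit of Theorem \ref{theo:Korn_gener2} (adapted to the $L^T$-twisted shear combination appearing in $\widetilde a_+$) applied to the coupled pair $(\eta D_1^h\widetilde\varphi,\eta D_1^h\widetilde w)$, I obtain full $H^1$-coercivity of the leading term; Young's inequality then absorbs the $H^1$-norms on the right-hand side and yields the uniform-in-$h$ bound that forces $\partial_{y_1}\widetilde\varphi,\partial_{y_1}\widetilde w\in H^1(B_{\sigma/2}^+)$.

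It remains to recover $\partial_{y_2}^2\widetilde\varphi$ and $\partial_{y_2}^2\widetilde w$ by rewriting the transformed system in strong form on $B_\sigma^+$. In the equation associated with $\widetilde w$, the coefficient of $\partial_{y_2}^2\widetilde w$ is $(L\widetilde S L^T)_{22}=\widetilde S L^T e_2\cdot L^T e_2>0$ by \eqref{eq:reg-6-2} and the nonsingularity of $L$, so $\partial_{y_2}^2\widetilde w$ is given in terms of $\nabla\widetilde\varphi$, $\partial_{y_1}\widetilde w$, $\partial_{y_1}\partial_{y_2}\widetilde w$ and Lipschitz coefficient data. For the vector $\widetilde\varphi$, the coefficient matrix of $\partial_{y_2}^2\widetilde\varphi$ in the first two equations of the strong system is $(\widetilde P_{i2r2})_{i,r}$, which is symmetric by the major and minor symmetries of $\widetilde{\mathbb P}$ and positive definite by \eqref{eq:reg-Ptilde-strell} with $b=e_2$; inverting it expresses $\partial_{y_2}^2\widetilde\varphi$ in terms of quantities already bounded in $L^2(B_{\sigma/2}^+)$. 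The main obstacle I anticipate is the correct handling of the Neumann boundary term when the data lies only in $H^{1/2}$: since $D_1^h\widetilde{\mathcal M}$ is not $L^2$-bounded, the argument must be carried out within the $H^{-1/2}$-$H^{1/2}$ duality, and the resulting trace norms on the test functions must be absorbed into the coercive left-hand side by Young's inequality without squandering the small parameter. A secondary subtlety is that $\widetilde a_+$ controls only $\widehat\nabla\widetilde\varphi$ and $\widetilde\varphi+L^T\nabla\widetilde w$, so separate ellipticity for $\widetilde\varphi$ and $\widetilde w$ is not available and one must rely on a coupled generalized Korn estimate after accounting for the change of variables encoded in $L$.
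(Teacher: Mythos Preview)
Your overall architecture---Nirenberg tangential difference quotients followed by algebraic recovery of $\partial_{y_2}^2\widetilde\varphi$ and $\partial_{y_2}^2\widetilde w$ from the strong form---is exactly what the paper does, and your treatment of the normal step is essentially correct (the positive definiteness of $(\widetilde P_{i2r2})_{i,r}$ and the positivity of $(L\widetilde S L^T)_{22}$ are the right ingredients; note however that $\widetilde{\mathbb P}$ has only the \emph{major} symmetry \eqref{eq:reg-Ptilde-sym}, not the minor ones, so the symmetry of $(\widetilde P_{i2r2})$ must be read from $\widetilde P_{i2r2}=\widetilde P_{r2i2}$).

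The genuine gap is in the coercivity step for the tangential part. You write that $\widetilde a_+$ ``controls only $\widehat\nabla\widetilde\varphi$ and $\widetilde\varphi+L^T\nabla\widetilde w$'' and plan to close with Legendre--Hadamard ellipticity \eqref{eq:reg-Ptilde-strell} together with a generalized Korn inequality. But the paper states explicitly that $\widetilde{\mathbb P}$ ``neither has the minor symmetries nor is strongly convex''. Rank-one convexity \eqref{eq:reg-Ptilde-strell} does \emph{not} give a pointwise lower bound $\widetilde{\mathbb P}\nabla f\cdot\nabla f\ge c|\widehat\nabla f|^2$, and the G{\aa}rding-type bound it does give requires test functions in $H^1_0$, whereas your difference-quotient test functions $\eta D_1^h\widetilde\varphi$ vanish only on $\Gamma_\sigma^+$ and not on $\Gamma_\sigma$. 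Hence neither input of Theorem~\ref{theo:Korn_gener2} is available to you in the $y$-variables, and the ``full $H^1$-coercivity'' you assert does not follow from the tools you invoke.

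The paper's fix is to pull the bending part back through $\mathcal T_{(j)}$: since $\int_{B_\sigma^+}\widetilde{\mathbb P}\nabla_y\widetilde f\cdot\nabla_y\widetilde f\,dy=\int_{\Omega_j\cap\Omega}\mathbb P\nabla_x f\cdot\nabla_x f\,dx$ with $f=\widetilde f\circ\mathcal T_{(j)}$, one can use the \emph{strong convexity} \eqref{eq:convex-P} of the original $\mathbb P$ to get $\ge c\int|\widehat\nabla_x f|^2$, then apply the ordinary second Korn inequality on $H^1_{\partial\Omega_j\cap\Omega}(\Omega_j\cap\Omega,\R^2)$ (functions vanishing on the curved part), and finally change variables back to obtain $\widetilde a_+^{\widetilde{\mathbb P}}(\widetilde f)\ge c\|\nabla_y\widetilde f\|_{L^2(B_\sigma^+)}^2$ for $\widetilde f=\tau_{1,s}(\vartheta\widetilde\varphi)$; see \eqref{eq:reg-12-1}--\eqref{eq:reg-13-1}. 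With this in hand, no coupled or twisted Korn inequality is needed at all: the bending term already dominates $\|\nabla(\tau_{1,s}(\vartheta\widetilde\varphi))\|_{L^2}$, and the shear term gives the remaining piece $\|\tau_{1,s}(\vartheta\widetilde\varphi)+L^T\nabla(\tau_{1,s}(\vartheta\widetilde w))\|_{L^2}$ directly from \eqref{eq:reg-6-2}. Your proposal would be repaired by replacing the appeal to \eqref{eq:reg-Ptilde-strell} plus generalized Korn with this pull-back argument.
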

Recalling that $\Omega = \Omega_0 \cup \left ( \bigcup_{j=1}^N
{\cal T}_{(j)}^{-1}(B_{ \frac{\sigma}{2}}^+) \right )$, the
estimate \eqref{eq:reg-1-1} follows by applying the inverse
mapping ${\cal{T}}_{(j)}^{-1}$ to \eqref{eq:reg-7-1}, $j=1,...,N$,
and by using the interior estimate \eqref{eq:reg-4-2}.

\medskip

\section{Three sphere inequality and strong unique continuation} \label{sec:UC}
In the present section we assume that $\Omega$ is a bounded domain in $\R^2$
of Lipschitz class with  constants $\rho_0$, $M_0$ and we assume that the plate is isotropic with Lam\'{e} parameters
$\lambda,\mu$. We assume that $\lambda,\mu \in C^{0,1}(\overline{\Omega})$ and that, for given positive constants $\alpha_0,\alpha_1,\gamma_0$, they satisfy the following conditions

\begin{equation}
  \label{eq:ser-12}
    \mu(x)\geq \alpha_0, \quad 2\mu(x)+3\lambda(x)\geq \gamma_0,
\end{equation}
and
\begin{equation}
  \label{eq:ser-13}
    \|\lambda\|_{C^{0,1}(\overline{\Omega})}+\|\mu\|_{C^{0,1}(\overline{\Omega})}\leq \alpha_1.
\end{equation}

We assume that the \textit{plate shearing matrix} has the form $SI_2$ where $S\in C^{0,1}(\overline{\Omega})$ is the real valued function defined by
\begin{equation}
  \label{eq:ser-8}
    S=\frac{Eh}{2(1+\nu)},
\end{equation}
where
\begin{equation}
  \label{eq:ser-20}
   E=\frac{\mu(2\mu+3\lambda)}{\mu+\lambda}, \quad \nu=\frac{\lambda}{2(\mu+\lambda)}
\end{equation}
and we assume that \textit{plate bending tensor} $\mathbb P$ has the following form
\begin{equation}
  \label{eq:ser-10}
    {\mathbb P} A = B\left[(1-\nu)\widehat{A}+\nu tr(A)I_2\right], \quad \hbox{for every } 2 \times 2 \quad \hbox{matrix } A,
\end{equation}
where
\begin{equation}
  \label{eq:ser-11}
    B=\frac{Eh^3}{12(1-\nu^2)}.
\end{equation}
By \eqref{eq:ser-12} and \eqref{eq:ser-13} and noticing that $S=h\mu$, we have that
\begin{equation}
  \label{eq:convex-S-ser}
    h \sigma_0 \leq S, \quad \hbox{in } \Omega,\quad  \|S\|_{C^{0,1}(\overline{\Omega})}\leq h \sigma_1
\end{equation}
and
\begin{equation}
  \label{eq:convex-P-ser}
     \frac{h^3}{12} \xi_0 | \widehat{A} |^2 \leq {\mathbb P}A \cdot A \leq \frac{h^3}{12} \xi_1 | \widehat{A} |^2, \quad \hbox{in } \Omega,
\end{equation}
for every $2\times 2$ matrix $A$, where

\begin{equation}
  \label{eq:constants_dependence}
     \sigma_0 =\alpha_0, \quad \sigma_1 =\alpha_1, \quad \xi_0=\min\{2\alpha_0, \gamma_0\}, \quad
        \xi_1=2\alpha_1.
\end{equation}

\begin{theo}
   \label{theo:three-sphere}

Under the the above hypotheses on $\Omega$, $S$ and $\mathbb P$,
let $(\varphi, w) \in H_{loc}^2(\Omega, \R^2) \times
H_{loc}^2(\Omega)$ be a solution of the system
\begin{center}
\( {\displaystyle \left\{
\begin{array}{lr}
    \mathrm{\divrg}(S(\varphi+\nabla w))=0,
    & \mathrm{in}\ \Omega,
        \vspace{0.25em}\\
    \mathrm{\divrg}({\mathbb P}\nabla \varphi)-S(\varphi+\nabla w)=0, & \mathrm{in}\ \Omega.
          \vspace{0.25em}\\

\end{array}
\right. } \) \vskip -4.5em
\begin{eqnarray}
& & \label{eq:ser-29-10}\\
& & \label{eq:ser-29-11}
\end{eqnarray}
\end{center}
Let $\bar x\in\Omega$ and $R_1>0$ be such that $B_{R_1}(\bar x)\subset \Omega$. Then there exists $\theta\in (0,1)$, $\theta$ depends on $\alpha_0,\alpha_1,\gamma_0, \frac{\rho_0}{h}$ only, such that if $0<R_3<R_2<R_1$ and $\frac{R_3}{R_1}\leq \frac{R_2}{R_1}\leq \theta$ then we have

\begin{equation}
  \label{eq:three-sphere-1}
     \int_{B_{R_2}(\bar x)} \left|V\right|^2\leq C\left(\int_{B_{R_3}(\bar x)} \left|V\right|^2\right)^{\tau}\left(\int_{B_{R_1}(\bar x)} \left|V\right|^2\right)^{1-\tau}
\end{equation}
where
\begin{equation}
  \label{eq:three-sphere-2}
   \left|V\right|^2=|\varphi|^2+\frac{1}{\rho^2_0}|w|^2,
\end{equation}
$\tau\in(0,1)$ depends on $\alpha_0,\alpha_1,\gamma_0, \frac{R_3}{R_1}, \frac{R_2}{R_1}, \frac{\rho_0}{h}$ only and $C$ depends on $\alpha_0,\alpha_1,\gamma_0, \frac{R_2}{R_1}, \frac{\rho_0}{h}$ only. In addition, keeping $R_2, R_1$ fixed, we have

\begin{equation}
  \label{eq:three-sphere-3}
   \tau=\mathcal{O}\left(\left|\log R_3\right|^{-1}\right), \quad \hbox{as }   R_3\rightarrow 0.
\end{equation}

\end{theo}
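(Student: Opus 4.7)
\medskip

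\textbf{Proof plan.} Without loss of generality I would translate so that $\bar x=0$, and first make the principal structure of the system transparent. In the isotropic case one computes, using \eqref{eq:ser-8}--\eqref{eq:ser-11}, that equation \eqref{eq:ser-29-10} reads
\[
S\Delta w + S\,\mathrm{div}\,\varphi + \nabla S\cdot(\varphi+\nabla w)=0,
\]
so its principal part in $w$ is $S\Delta w$, with the only second--order contribution in $\varphi$ being $S\,\mathrm{div}\,\varphi$ (which is first order in $\varphi$). Similarly, \eqref{eq:ser-29-11} yields a Lam\'e--type system
\[
\tfrac{B(1-\nu)}{2}\Delta\varphi+\tfrac{B(1+\nu)}{2}\nabla\,\mathrm{div}\,\varphi=S\varphi+S\nabla w+\text{(l.o.t.)},
\]
whose principal part is a strongly elliptic second order operator on $\varphi$, coupled to $w$ only through the first order term $S\nabla w$ and the zero order term $S\varphi$. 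Thus $(\varphi,w)$ solves an elliptic system whose principal symbol is block diagonal (Laplacian on $w$, Lam\'e on $\varphi$) while the cross-coupling is of lower order, so it is amenable to the Carleman technique.

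The second step is a Carleman estimate with a singular radial weight, following the approach of \cite{LNW2010}. I would take a weight of the form $\phi_\tau(x)=-\tau\psi(\log|x|)$, where $\psi$ is a suitable convex perturbation of the identity, and prove an estimate of the type
\begin{equation*}
\sum_{|\alpha|\leq 1}\tau^{3-2|\alpha|}\!\int\! e^{2\tau\phi}|x|^{-2+2|\alpha|}|D^\alpha V|^2 \leq C\Bigl(\!\int\! e^{2\tau\phi}|\Delta w|^2 + \!\int\! e^{2\tau\phi}|L\varphi|^2\Bigr),
\end{equation*}
valid for $V=(\varphi,\rho_0^{-1}w)\in C^\infty_0(B_{R_1}\setminus\{0\},\R^3)$ and for all $\tau$ larger than a threshold depending on $\alpha_0,\alpha_1,\gamma_0,\rho_0/h$. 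Such an estimate is exactly of the type used in \cite{LNW2010} for second order scalar operators; the essential new point is to prove it simultaneously for the scalar Laplacian acting on $w$ and for the Lam\'e operator $L$ acting on the vector $\varphi$, which can be carried out term by term because $L$ is strongly elliptic (see \eqref{eq:reg-Ptilde-strell} for the analogous computation) and admits Carleman weights of the same logarithmic type as the Laplacian. Once the estimate is available, the cross-coupling terms $S\,\mathrm{div}\,\varphi$, $\nabla S\cdot(\varphi+\nabla w)$, $S\varphi+S\nabla w$, and the Lipschitz perturbations of the principal coefficients are absorbed into the left hand side by taking $\tau$ sufficiently large, uniformly with respect to the ratios $R_j/R_1$.

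Next I apply the Carleman estimate to $\chi V$, where $\chi$ is a smooth radial cut-off equal to $1$ on a slightly smaller annulus than $B_{R_2}\setminus B_{R_3}$ and supported in a slightly larger one inside $B_{R_1}$. Commuting $\chi$ with $\Delta$ and $L$ produces source terms supported in two thin shells near $|x|=R_3$ and $|x|=R_1$, while the solid left hand side controls the $L^2$ norm of $V$ on $B_{R_2}\setminus B_{2R_3}$ weighted by $e^{2\tau\phi}$. Since $\phi$ is monotone in $|x|$, one obtains an inequality of the form
\[
e^{2\tau\phi(R_2)}\!\int_{B_{R_2}\setminus B_{2R_3}}\!|V|^2 \le C\Bigl(e^{2\tau\phi(R_3)}\!\int_{B_{R_3}}\!|V|^2+e^{2\tau\phi(R_1)}\!\int_{B_{R_1}}\!|V|^2\Bigr),
\]
plus a harmless term $\int_{B_{2R_3}}|V|^2$ which one adds to the right hand side. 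Optimizing $\tau$ as usual (choosing it to balance the two terms on the right) yields \eqref{eq:three-sphere-1} with $\tau$ given by the ratio $(\phi(R_2)-\phi(R_1))/(\phi(R_3)-\phi(R_1))$, and the logarithmic nature of the weight produces the asymptotic \eqref{eq:three-sphere-3}. The main obstacle I expect is the first step of the Carleman estimate: proving it with the optimal log--type weight simultaneously for the Lam\'e system and the Laplacian under only Lipschitz regularity of $\lambda,\mu$, keeping all constants explicit in terms of $\alpha_0,\alpha_1,\gamma_0,\rho_0/h$; once that ingredient is in place, the cut-off and optimization arguments are standard.
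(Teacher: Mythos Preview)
Your overall architecture (Carleman estimate with singular logarithmic weight, radial cut--off, optimization in the Carleman parameter) is correct and matches the scheme behind \cite{LNW2010}. However, the paper does \emph{not} attempt a Carleman estimate for the Lam\'e operator acting on $\varphi$ directly. Instead it introduces the auxiliary scalar unknown
\[
v=b\,\mathrm{div}\,\varphi,\qquad b=\frac{4(\lambda+\mu)}{2\mu+\lambda},\qquad a=\frac{2\mu+3\lambda}{4(\lambda+\mu)},
\]
and uses the algebraic identity $a+\tfrac{1}{b}=1$ to obtain $\mathrm{div}\bigl(\Delta\varphi+\nabla(av)\bigr)=\Delta v$. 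After this substitution the enlarged system in $(\widetilde w,\widetilde\varphi,\widetilde v)$ has the \emph{scalar Laplacian} as principal part on every component, with only first order cross--couplings and the extra term $\rho_0\,\mathrm{div}\,\widetilde G$. This is exactly the format of system (1.5) in \cite{LNW2010}, so the three--spheres inequality follows by a direct citation of their Theorem~1.1; no new Carleman estimate needs to be proved.

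Your proposal instead keeps the Lam\'e block on $\varphi$ and asserts that the Carleman estimate ``can be carried out term by term because $L$ is strongly elliptic \dots\ and admits Carleman weights of the same logarithmic type as the Laplacian''. This is precisely the hard step you correctly flag as the main obstacle, and strong ellipticity alone does not furnish optimal--exponent Carleman estimates for systems with merely Lipschitz coefficients; in \cite{LNW2010} that obstacle is overcome by the very auxiliary--variable reduction above, not by a direct argument on $L$. So while your plan is not wrong, the missing ingredient is exactly the reduction $v=b\,\mathrm{div}\,\varphi$; with it, the remaining cut--off/optimization steps you describe go through as you outline, and without it you would effectively be reproving the main analytic result of \cite{LNW2010} rather than invoking it.
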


\begin{proof}
It is not restrictive to assume that $\bar x=0\in \Omega$. In order to prove \eqref{eq:three-sphere-1}, first we introduce an auxiliary unknown which allows us to obtain a new system of equations with the Laplace operator as the principal part, then we obtain \eqref{eq:three-sphere-1} by applying \cite[Theorem 1.1]{LNW2010}.
By \eqref{eq:ser-8} and \eqref{eq:ser-10} we have
\begin{multline}
    \label{eq:ser-29-20}
    \mathrm{\divrg}({\mathbb P}\nabla \varphi)-S(\varphi+\nabla w)=
    \\
    =\frac{h^3}{12}\left[\mathrm{\divrg}\left(\mu\left(\nabla\varphi+\nabla^T\varphi\right)\right)+
    \nabla\left(\frac{2\lambda\mu}{2\mu+\lambda}\mathrm{\divrg}\varphi\right)-\frac{12\mu}{h^2}(\varphi+\nabla w)\right].
\end{multline}

\medskip

Now we denote
\begin{equation}
\label{eq:ser-29-21}
  a=\frac{2\mu+3\lambda}{4(\lambda+\mu)}, \quad b=\frac{4(\lambda+\mu)}{2\mu+\lambda},
\end{equation}

\medskip

\begin{equation*}
  G=\left(\nabla\varphi+\nabla^T\varphi\right)\frac{\nabla\mu}{\mu}-\left[\frac{\nabla\mu}{\mu}+
  \frac{\mu(2\mu+3\lambda)}{2\mu+\lambda}\nabla\left(\frac{1}{\mu}\right)\right]\mathrm{\divrg}\varphi
\end{equation*}
and

\medskip

\begin{equation}
\label{eq:ser-29-25}
  v=b\mathrm{\divrg}\varphi.
\end{equation}

\medskip

By \eqref{eq:ser-29-20} we have
\begin{equation*}
     \mathrm{\divrg}({\mathbb P}\nabla \varphi)-S(\varphi+\nabla w)=\frac{h^3\mu}{12}\left[\Delta\varphi+\nabla(av)+G-\frac{12}{h^2}(\varphi+\nabla w)\right],
\end{equation*}
therefore equation \eqref{eq:ser-29-11} is equivalent to the equation

\begin{equation}
  \label{eq:ser-29-30}
     \Delta\varphi+\nabla(av)+G-\frac{12}{h^2}(\varphi+\nabla w)=0.
\end{equation}
Now, noticing that \eqref{eq:ser-29-21} gives $a+\frac{1}{b}=1$, we have
\begin{multline}
\label{eq:ser-29-40}
\mathrm{\divrg}\left(\Delta\varphi+\nabla(av)\right)=\Delta \left(\frac{v}{b}\right)+\Delta \left(av\right)= \Delta \left(\left(a+\frac{1}{b}\right)v\right)=\Delta v.
\end{multline}
Now we apply the divergence operator to both the sides of \eqref{eq:ser-29-30} and by \eqref{eq:ser-29-40}
we get
\begin{equation}
\label{eq:ser-29-50}
\Delta v+\mathrm{\divrg}G-\frac{12}{h^2}\mathrm{\divrg}(\varphi+\nabla w)=0.
\end{equation}
Finally, observing that by equation \eqref{eq:ser-29-10} we have
\begin{equation*}
     \mathrm{\divrg}(\varphi+\nabla w)=\mathrm{\divrg}\left(\frac{1}{S}S(\varphi+\nabla w)\right)=\nabla\left(\frac{1}{S}\right)\cdot S(\varphi+\nabla w),
\end{equation*}
by \eqref{eq:ser-29-50} we obtain
\begin{equation}
\label{eq:ser-29-60}
\Delta v+\mathrm{\divrg}G-\frac{12}{h^2}\nabla\left(\frac{1}{S}\right)\cdot S(\varphi+\nabla w)=0.
\end{equation}
On the other side by \eqref{eq:ser-29-25} we have
\begin{equation}
\label{eq:ser-29-61}
\mathrm{\divrg}(S(\varphi+\nabla w))=S\Delta w+\frac{S}{b}v+\nabla S\cdot\varphi+\nabla S\cdot \nabla w,
\end{equation}
therefore, by  \eqref{eq:ser-29-61}, \eqref{eq:ser-29-10}, \eqref{eq:ser-8} and \eqref{eq:ser-20}, we have

\begin{equation}
\label{eq:ser-29-70}
\Delta w+\frac{2\mu+\lambda}{4(\lambda+\mu)}v+\frac{\nabla S}{S}\cdot\varphi+\frac{\nabla S}{S}\cdot\nabla w=0.
\end{equation}

\medskip

Now, in order to satisfy the homogeneity of norms we define
\begin{equation*}
 \widetilde{w}=w, \quad \widetilde{\varphi}=\rho_0\varphi, \quad \widetilde{v}=\rho^2_0v
\end{equation*}
and
\begin{equation*}
 \widetilde{G}=\rho_0G=\left(\nabla\widetilde{\varphi}+\nabla^T\widetilde{\varphi}\right)\frac{\nabla\mu}{\mu}-\left[\frac{\nabla\mu}{\mu}+
  \frac{\mu(2\mu+3\lambda)}{2\mu+\lambda}\nabla\left(\frac{1}{\mu}\right)\right]\mathrm{\divrg}\widetilde{\varphi}.
\end{equation*}

By  \eqref{eq:ser-29-30}, \eqref{eq:ser-29-60}, \eqref{eq:ser-29-70}, we have that $\widetilde{w},\widetilde{\varphi},\widetilde{v}$ satisfy the system

\begin{equation}
\label{1-141}
\left\{\begin{array}{ll}
\Delta \widetilde{w}+\frac{2\mu+\lambda}{4\rho^2_0(\lambda+\mu)}\widetilde{v}+\frac{\nabla S}{\rho_0 S}\cdot\widetilde{\varphi}+\frac{\nabla S}{S}\cdot\nabla \widetilde{w}=0, \quad \hbox{in } \Omega,\\[2mm]
\Delta\widetilde{\varphi}+\nabla(\frac{a}{\rho_0}\widetilde{v})+\widetilde{G}-\frac{12}{h^2}(\widetilde{\varphi}+\rho_0\nabla \widetilde{w})=0, \quad \hbox{in } \Omega,\\[2mm]
\Delta \widetilde{v}+\rho_0\mathrm{\divrg}\widetilde{G}-\frac{12}{h^2}\rho_0\nabla\left(\frac{1}{S}\right)\cdot S(\widetilde{\varphi}+\rho_0\nabla \widetilde{w})=0, \quad \hbox{in } \Omega.
\end{array}\right.
\end{equation}

\bigskip

The above system has the same form of system (1.5) of \cite{LNW2010}. As a matter of fact, as soon as we introduce the following notation

\begin{equation*}
u=\left(\widetilde{w},\widetilde{\varphi}\right),
\end{equation*}

\begin{equation*}
P_1(x,\partial)\widetilde{v}
=\left(
\begin{array}{c}
\frac{2\mu+\lambda}{4\rho^2_0(\lambda+\mu)}\widetilde{v}\\
\\
\nabla(\frac{a}{\rho_0}\widetilde{v})
\end{array}%
\right), \quad P_2(x,\partial)u=\left(
\begin{array}{c}
\frac{\nabla S}{\rho_0 S}\cdot\widetilde{\varphi}+\frac{\nabla S}{S}\cdot\nabla \widetilde{w}\\
\\
\widetilde{G}-\frac{12}{h^2}(\widetilde{\varphi}+\rho_0\nabla \widetilde{w}),
\end{array}%
\right)
\end{equation*}

\begin{equation*}
Q_1(x,\partial)\widetilde{v}
=0, \quad Q_2(x,\partial)u=-\frac{12}{h^2}\rho_0\nabla\left(\frac{1}{S}\right)\cdot S(\widetilde{\varphi}+\rho_0\nabla \widetilde{w}),
\end{equation*}
system \eqref{1-141} is equivalent to

\begin{equation}
\label{ser30-20}
\left\{\begin{array}{ll}
\Delta u+P_1(x,\partial)\widetilde{v}+P_2(x,\partial)u=0, \quad \hbox{in } \Omega,\\[2mm]
\Delta \widetilde{v}+Q_1(x,\partial)\widetilde{v}+Q_2(x,\partial)u+\rho_0\mathrm{\divrg}\widetilde{G}=0, \quad \hbox{in } \Omega.
\end{array}\right.
\end{equation}
Notice that, likewise to \cite{LNW2010}, $P_j(x,\partial)$ and $Q_j(x,\partial)$, $j=1,2$, are first order operators with $L^{\infty}$ coefficients. In addition, although $\widetilde{G}$ is slightly different from the term $G$ of \cite{LNW2010}, the proof of Theorem 1.1 (after the scaling $x\rightarrow R_1 x$)  of such a paper can be used step by step to derive \eqref{eq:three-sphere-1}.
\end{proof}

\begin{cor}
\label{sucp}
Assume that $S$, $\mathbb P$ and $\Omega$ satisfy the same hypotheses of \ref{theo:three-sphere}, let $x_0\in\Omega$ and let  $(\varphi, w) \in H_{loc}^2(\Omega, \R^2) \times H_{loc}^2(\Omega)$ be a solution of the system \eqref{eq:ser-29-10}-\eqref{eq:ser-29-11} such that

\begin{equation}
\label{ser30-30}
\|\varphi\|_{L^2(B_r(\bar x))}+\frac{1}{\rho_0}\|w\|_{L^2(B_r(\bar x))}=\mathcal{O}\left(r^N\right), \quad \hbox{as } r\rightarrow 0, \quad \forall N\in\mathbb{N}
\end{equation}
then $\varphi\equiv 0$, $w\equiv 0$ in $\Omega$
\end{cor}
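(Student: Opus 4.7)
The plan is to deduce the corollary from the three spheres inequality of Theorem~\ref{theo:three-sphere} by a standard two-step argument: first show that the infinite-order vanishing at $x_0$ forces $(\varphi,w)$ to vanish identically on a whole ball centered at $x_0$, and then propagate this vanishing throughout the connected open set $\Omega$ by a chain-of-balls argument.

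For the first step, without loss of generality assume $\bar x = x_0$. Choose $R_1>0$ so that $B_{R_1}(x_0)\subset\Omega$ and fix $R_2$ with $R_2/R_1 \le \theta$ (say $R_2 = \theta R_1$). For any sufficiently small $R_3 \in (0,R_2)$, Theorem~\ref{theo:three-sphere} gives
\begin{equation*}
\int_{B_{R_2}(x_0)}|V|^2 \le C\left(\int_{B_{R_3}(x_0)}|V|^2\right)^{\tau}\left(\int_{B_{R_1}(x_0)}|V|^2\right)^{1-\tau},
\end{equation*}
with $|V|^2=|\varphi|^2+\rho_0^{-2}|w|^2$ and $\tau=\tau(R_3)$ satisfying the asymptotic~\eqref{eq:three-sphere-3}. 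The hypothesis~\eqref{ser30-30} rephrased in terms of $|V|^2$ yields, for every $N\in\mathbb{N}$, a constant $C_N$ with $\int_{B_{R_3}(x_0)}|V|^2 \le C_N R_3^{2N}$. Substituting, and writing $F = \int_{B_{R_1}(x_0)}|V|^2$, we obtain
\begin{equation*}
\int_{B_{R_2}(x_0)}|V|^2 \le C\, C_N^{\,\tau}\, R_3^{2N\tau}\, F^{1-\tau}.
\end{equation*}

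The key point is the optimal asymptotic~\eqref{eq:three-sphere-3}: keeping $R_1,R_2$ fixed, $\tau|\log R_3|$ remains of order one as $R_3\to 0$, so $R_3^{2N\tau} = \exp(-2N\tau|\log R_3|) \le \exp(-2Nc_0)$ for some constant $c_0>0$ independent of $R_3$ (once $R_3$ is small enough). Since $\tau\to 0$, we also have $C_N^{\,\tau}\to 1$ and $F^{1-\tau}\to F$. Letting $R_3\to 0$ with $N$ fixed, and then letting $N\to\infty$, yields $\int_{B_{R_2}(x_0)}|V|^2 = 0$, hence $(\varphi,w)\equiv 0$ in $B_{R_2}(x_0)$. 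I expect this step, namely extracting from the stated $\mathcal{O}(|\log R_3|^{-1})$ behavior a workable bound of the form $R_3^{2N\tau}\le e^{-c_0 N}$, to be the main technical point requiring care: the rate~\eqref{eq:three-sphere-3} must be used as a two-sided estimate, since only an upper bound on $\tau$ would not suffice to drive $R_3^{2N\tau}$ to zero as $N\to\infty$.

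For the second step, define $U=\{x\in\Omega : V\equiv 0 \text{ in some neighborhood of } x\}$. By the first step $x_0\in U$, so $U\ne\emptyset$, and $U$ is open by definition. To show $U$ is closed in $\Omega$, let $y\in \overline{U}\cap\Omega$ and pick $R_1'>0$ with $B_{R_1'}(y)\subset\Omega$. Choose $R_3' = \theta R_1'/2$ and select $z\in U$ close enough to $y$ so that $B_{R_3'}(y)$ meets the ball on which $V$ vanishes around $z$; more simply, take $z\in U$ so near $y$ that $V\equiv 0$ on a small ball around $y$ contained in the neighborhood of $z$ where $V$ vanishes. Since $V$ vanishes identically on $B_{R_3'}(y)$, the three spheres inequality applied at $\bar x = y$ with radii $R_3', R_2', R_1'$ (satisfying the ratio hypothesis) gives $\int_{B_{R_2'}(y)}|V|^2 = 0$, so $y\in U$. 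Hence $U$ is also closed, and by connectedness of $\Omega$, $U=\Omega$, proving $\varphi\equiv 0$ and $w\equiv 0$ in $\Omega$.
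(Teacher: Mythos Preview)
Your approach is exactly the standard one the paper alludes to (the paper's own proof is a one-line reference to connectedness plus the three spheres inequality), and your Step~1 is correct; your caveat that \eqref{eq:three-sphere-3} must give a \emph{lower} bound on $\tau|\log R_3|$ is well taken and is precisely what the phrase ``optimal exponent'' encodes---in the explicit formulas produced by Carleman-type arguments one has $\tau \asymp |\log R_3|^{-1}$, so $R_3^{2N\tau}\le e^{-c_0N}$ as you wrote.

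Your Step~2, however, has a small wobble. As written you fix $R_3'=\theta R_1'/2$ and then assert that for $z\in U$ near $y$ one has $V\equiv 0$ on $B_{R_3'}(y)$; but the vanishing neighborhood of $z$ coming from the bare definition of $U$ could be arbitrarily small, so neither ``$B_{R_3'}(y)$ meets it'' nor ``a small ball around $y$ sits inside it'' follows without further input. The clean fix is to reuse Step~1, centered at $z$ rather than $y$: since $V$ vanishes to infinite order at $z$, Step~1 gives $V\equiv 0$ on $B_{\theta R}(z)$ with $R$ comparable to $\mathrm{dist}(y,\partial\Omega)$, a radius bounded below independently of how close $z$ is to $y$; for $|z-y|$ small enough this ball contains $y$, so $y\in U$. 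With that adjustment the connectedness argument goes through.
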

\begin{proof}
It is standard consequence of the inequality \eqref{eq:three-sphere-1} and of the connectedness of $\Omega$. For more details see \cite[Corollary 6.4]{MRV07}.
\end{proof}

\section{Appendix}
\label{sec:appendix}

In this appendix we sketch a proof of Theorem
\ref{theo:reg-loc-bound}.

Without loss of generality, we can assume $\sigma=1$. Our proof
consists of two main steps. As first step, we estimate the partial
derivatives $ \frac{\partial }{\partial y_1} \nabla
\widetilde{\varphi}$, $ \frac{\partial }{\partial y_1} \nabla
\widetilde{w}$ along the direction $e_1$ parallel to the flat
boundary $\Gamma_1$ of $B_1^+$. The second step will concern with
the estimate of the partial derivatives $ \frac{\partial
}{\partial y_2} \nabla \widetilde{\varphi}$, $ \frac{\partial
}{\partial y_2} \nabla \widetilde{w}$ along the direction
orthogonal to the flat boundary $\Gamma_1$.

\medskip

\textit{First step.} (Estimate of the tangential derivatives)

Let $\vartheta \in C^\infty_0 (\R^2)$ be a function such that $0
\leq \vartheta(y) \leq 1$ in $\R^2$, $\vartheta \equiv 1$ in
$B_\rho$, $\vartheta \equiv 0$ in $\R^2 \setminus B_{\eta}$, and
$|\nabla^k \vartheta| \leq C$, $k=1,2$, where $\rho= \frac{3}{4}$,
$\eta=\frac{7}{8}$ and $C>0$ is an absolute constant.

For every functions $\widetilde{\psi} \in H_{\Gamma_1^+}^1(B_1^+,
\R^2)$, $\widetilde{v} \in H_{\Gamma_1^+}^1(B_1^+)$, we still
denote by $\widetilde{\psi} \in H^1(\R_+^2, \R^2)$, $\widetilde{v}
\in H^1(\R_+^2)$ their corresponding extensions to $\R_+^2$
obtained by taking $ \widetilde{\psi}=0$, $\widetilde{v}=0$ in
$\R_+^2 \setminus B_1^+$.

Given a real number $s \in \R \setminus \{0\}$, the difference
operator in direction $y_1$ of any function $f$ is defined as
\begin{equation}
    \label{eq:reg-9-1}
    (\tau_{1,s} f)(y) = \frac{f(y+se_1)- f(y)}{s}.
\end{equation}
In the sequel we shall assume $|s| \leq \frac{1}{16}$. We note
that if $\widetilde{\varphi} \in H^1(B_1^+,\R^2)$, $\widetilde{w}
\in H^1(B_1^+)$, then $\tau_{1,s}(\vartheta \widetilde{\varphi})
\in H^1_{\Gamma_1^+}(B_1^+,\R^2)$ and  $\tau_{1,s}(\vartheta
\widetilde{w}) \in H^1_{\Gamma_1^+}(B_1^+)$.

We start by evaluating the bilinear form $
\widetilde{a}_+((\cdot,\cdot), (\cdot,\cdot))$ defined in
\eqref{eq:reg-5-6} with $\widetilde{\varphi}$, $\widetilde{w}$
replaced by $\tau_{1,s}(\vartheta \widetilde{\varphi})$,
$\tau_{1,s}(\vartheta \widetilde{w})$, respectively. Next, we
elaborate the expression of $ \widetilde{a}_+((\cdot,\cdot),
(\cdot,\cdot))$ and, by integration by parts, we move the
difference operator in direction $y_1$ {}from the functions
$\vartheta \widetilde{\varphi}$, $\vartheta \widetilde{w}$ to the
functions $\widetilde{\psi}$, $\widetilde{v}$. After these
calculations, we can write
\begin{equation}
    \label{eq:reg-9-2}
    \widetilde{a}_+( (\tau_{1,s}(\vartheta \widetilde{\varphi}), \tau_{1,s}(\vartheta
    \widetilde{w})), (\widetilde{\psi}, \widetilde{v})) =
    - \widetilde{a}_+( (\widetilde{\varphi}, \widetilde{w}),
    (\vartheta \tau_{1,-s}\widetilde{\psi}, \vartheta \tau_{1,-s}
    \widetilde{v})) + \widetilde{r},
\end{equation}
where the remainder $\widetilde{r}$ can be estimated as follows
\begin{equation}
    \label{eq:reg-9-3}
    |\widetilde{r}| \leq
    C
    \left (
    \|\widetilde{\varphi}\|_{H^1(B_1^+)} + \|\widetilde{w}\|_{H^1(B_1^+)}
    \right )
    \left (
    \|\nabla \widetilde{\psi}\|_{L^2(B_1^+)} + \|\nabla \widetilde{v}\|_{L^2(B_1^+)}
    \right ),
\end{equation}
where the constant $C>0$ depends on $M_0$,
$\|P\|_{C^{0,1}(\overline{\Omega})}$ and
$\|S\|_{C^{0,1}(\overline{\Omega})}$ only. It should be noticed
that a constructive Poincar\'{e} inequality for functions
belonging to $H^1(B_1^+)$ and vanishing on the portion
$\Gamma_1^+$ of the boundary of $B_1^+$ has been used in obtaining
\eqref{eq:reg-9-3}, see, for example, \cite{A-M-R02}.

Since $\widetilde{\psi} \in H^1_{\Gamma_1^+}(B_1^+,\R^2)$,
$\widetilde{v} \in H^1_{\Gamma_1^+}(B_1^+)$, the functions
$\vartheta \tau_{1,-s}\widetilde{\psi}$, $\vartheta
\tau_{1,-s}\widetilde{v}$ are test functions in the weak
formulation \eqref{eq:reg-5-5}, so that the opposite of the first
term on the right hand side of \eqref{eq:reg-9-2} can be written
as
\begin{equation}
    \label{eq:reg-10-1}
    \widetilde{a}_+( (\widetilde{\varphi}, \widetilde{w}),
    (\vartheta \tau_{1,-s}\widetilde{\psi}, \vartheta \tau_{1,-s}
    \widetilde{v}))=\widetilde{F}_+ ( \vartheta \tau_{1,-s}
    \widetilde{\psi}, \vartheta \tau_{1,-s}
    \widetilde{v})
\end{equation}
and, by using trace inequalities, we have
\begin{equation}
    \label{eq:reg-10-2}
    |\widetilde{F}_+ ( \vartheta \tau_{1,-s}
    \widetilde{\psi}, \vartheta \tau_{1,-s}
    \widetilde{v})| \leq
    C
    \left (
    \| \widetilde{{\cal{Q}}}\|_{H^{ \frac{1}{2}  }(\Gamma_1)} \cdot \|\nabla \widetilde{v} \|_{L^2(B_1^+)}+
    \| \widetilde{{\cal{M}}}\|_{H^{ \frac{1}{2}   }(\Gamma_1)} \cdot \|\nabla \widetilde{\psi} \|_{L^2(B_1^+)}
    \right ),
\end{equation}
where $C>0$ only depends on $M_0$. By
\eqref{eq:reg-9-2}--\eqref{eq:reg-10-2} we have
\begin{multline}
    \label{eq:reg-10-3}
    \widetilde{a}_+( (\tau_{1,s}(\vartheta \widetilde{\varphi}), \tau_{1,s}(\vartheta
    \widetilde{w})), (\widetilde{\psi}, \widetilde{v})) \leq
    \\
    \leq
    C
    \left (
    \| \widetilde{{\cal{Q}}}\|_{H^{ \frac{1}{2}  }(\Gamma_1)} +
    \| \widetilde{{\cal{M}}}\|_{H^{ \frac{1}{2}  }(\Gamma_1)} +
    \| \widetilde{\varphi}\|_{H^1(B_1^+)} + \| \widetilde{w}\|_{H^1(B_1^+)}
    \right )
    \cdot
    \\
    \cdot
    \left (
    \|\nabla \widetilde{v} \|_{L^2(B_1^+)}+
    \|\nabla \widetilde{\psi} \|_{L^2(B_1^+)}
    \right ),
\end{multline}
for every $(\widetilde{\psi}, \widetilde{v}) \in
H^1_{\Gamma_1^+}(B_1^+, \R^2) \times H^1_{\Gamma_1^+}(B_1^+)$,
where $C>0$ only depends on $M_0$, $\|\mathbb
P\|_{C^{0,1}(\overline{\Omega})}$ and
$\|S\|_{C^{0,1}(\overline{\Omega})}$.

We choose in \eqref{eq:reg-10-3} the test functions
\begin{equation}
    \label{eq:reg-11-1}
    \widetilde{\psi}=\tau_{1,s}(\vartheta \widetilde{\varphi}),
    \quad \widetilde{v}= \tau_{1,s}(\vartheta \widetilde{w}).
\end{equation}
The next step consists in estimating {}from below the quadratic
form $ \widetilde{a}_+((\cdot,\cdot), (\cdot,\cdot))$. To perform
this estimate, we write
\begin{equation}
    \label{eq:reg-11-2}
    \widetilde{a}_+( (\tau_{1,s}(\vartheta \widetilde{\varphi}), \tau_{1,s}(\vartheta
    \widetilde{w})), (\tau_{1,s}(\vartheta \widetilde{\varphi}), \tau_{1,s}(\vartheta
    \widetilde{w}))= \widetilde{a}_+^{\widetilde{\mathbb P}} (\tau_{1,s}(\vartheta
    \widetilde{\varphi})) +
    \widetilde{a}_+^{\widetilde{S}} (\tau_{1,s}(\vartheta
    \widetilde{\varphi}), \tau_{1,s}(\vartheta
    \widetilde{w}) ),
\end{equation}
where
\begin{equation}
    \label{eq:reg-11-3}
    \widetilde{a}_+^{\widetilde{\mathbb P}} (\tau_{1,s}(\vartheta
    \widetilde{\varphi}))= \int_{B_1^+} \widetilde{\mathbb P}
    \nabla (\tau_{1,s}(\vartheta
    \widetilde{\varphi})) \cdot \nabla (\tau_{1,s}(\vartheta
    \widetilde{\varphi})),
\end{equation}
\begin{multline}
    \label{eq:reg-11-4}
    \widetilde{a}_+^{\widetilde{S}} (\tau_{1,s}(\vartheta
    \widetilde{\varphi}), \tau_{1,s}(\vartheta
    \widetilde{w}) )= \\
    =\int_{B_1^+}  \widetilde{S}
    \left ( \tau_{1,s}(\vartheta
    \widetilde{\varphi})+ L^T \nabla (\tau_{1,s}(\vartheta
    \widetilde{w})) \right ) \cdot \left ( \tau_{1,s}(\vartheta
    \widetilde{\varphi})+ L^T \nabla (\tau_{1,s}(\vartheta
    \widetilde{w})) \right ).
\end{multline}
By \eqref{eq:reg-6-2}, the matrix $\widetilde{S}$ is definite
positive, and then $\widetilde{a}_+^{\widetilde{S}}(\cdot,\cdot)$
can be easily estimated {}from below as follows
\begin{equation}
    \label{eq:reg-11-5}
    \widetilde{a}_+^{\widetilde{S}} (\tau_{1,s}(\vartheta
    \widetilde{\varphi}), \tau_{1,s}(\vartheta
    \widetilde{w}) )\geq
    C \int_{B_1^+}  | \tau_{1,s}(\vartheta
    \widetilde{\varphi})+ L^T \nabla (\tau_{1,s}(\vartheta
    \widetilde{w})) |^2,
\end{equation}
where $C>0$ only depends on $M_0$ and $\sigma_0$.

The fourth order tensor $ \widetilde{\mathbb P}$ neither has the
minor symmetries nor is strongly convex. Then, in order to
estimate {}from below $\widetilde{a}_+^{\widetilde{\mathbb P}}
(\tau_{1,s}(\vartheta \widetilde{\varphi}))$, we found convenient
apply the inverse transformation ${\cal{T}}_{(j)}^{-1}$ (see
\eqref{eq:reg-5-2}) and use the strong convexity of the tensor
$\mathbb P$. To simplify the notation, let $\widetilde{f} \equiv
\tau_{1,s}(\vartheta \widetilde{\varphi}) \in H_{\Gamma_1^+}^1
(B_1^+, \R^2)$. We have
\begin{equation}
    \label{eq:reg-12-1}
    \widetilde{a}_+^{\widetilde{\mathbb P}}(\widetilde{f})= \int_{B_1^+}
    \widetilde{\mathbb P}(y) \nabla_y \widetilde{f} \cdot \nabla_y
    \widetilde{f}dy = \int_{\Omega_j \cap \Omega} \mathbb P(x)
    \nabla_x f \cdot \nabla_x f dx \geq C \int_{\Omega_j \cap
    \Omega}| \widehat{\nabla}_x f|^2 dx,
\end{equation}
where $f(x)= \widetilde{f}( {\cal{T}}_{(j)}(x))$ and $C>0$ is a
constant only depending on $\xi_0$. By Korn's inequality on
$H_{\Gamma_1^+}(B_1^+,\R^2)$ (see, for example, Theorem 5.7 in
\cite{A-M-R08}) and by the change of variables
$y={\cal{T}}_{(j)}(x)$, we have
\begin{equation}
    \label{eq:reg-12-2}
    \int_{\Omega_j \cap \Omega}| \widehat{\nabla}_x f|^2 dx \geq C
    \int_{\Omega_j \cap \Omega}| \nabla_x f|^2 dx =
    \int_{B_1^+} |\nabla_y \widetilde{f} L|^2 \iota^{-1}dy \geq C'
    \int_{B_1^+} |\nabla_y \widetilde{f}|^2 dy,
\end{equation}
where $C'>0$ only depends on $M_0$, and in the last step we have
taken into account that the matrix $L$ is nonsingular. Then, by
\eqref{eq:reg-12-1} and \eqref{eq:reg-12-2}, we have
\begin{equation}
    \label{eq:reg-13-1}
    \widetilde{a}_+^{\widetilde{\mathbb P}}(\tau_{1,s}(\vartheta
    \widetilde{\varphi})) \geq C \int_{B_1^+} | \nabla (\tau_{1,s}(\vartheta
    \widetilde{\varphi}))|^2,
\end{equation}
where $C>0$ only depends on $M_0$ and $\xi_0$. Now, by inserting
the estimates \eqref{eq:reg-11-5} and \eqref{eq:reg-13-1} in
\eqref{eq:reg-10-3}, with $\widetilde{\psi}$, $\widetilde{v}$ as
in \eqref{eq:reg-11-1}, and by Poincar\'{e}'s inequality in
$H^1_{\Gamma_1^+}(B_1^+)$, we have
\begin{multline}
    \label{eq:reg-14-1}
    \| \nabla(\tau_{1,s}(\vartheta
    \widetilde{\varphi}))\|_{L^2(B_1^+)} + \|\tau_{1,s}(\vartheta
    \widetilde{\varphi}) + L^T \nabla (\tau_{1,s}(\vartheta
    \widetilde{w}))\|_{L^2(B_1^+)} \leq
    \\
    \leq
    C\left (
    \| \widetilde{{\cal{Q}}}\|_{H^{ \frac{1}{2}  }(\Gamma_1)} +
    \| \widetilde{{\cal{M}}}\|_{H^{ \frac{1}{2}  }(\Gamma_1)} +
    \| \widetilde{\varphi}\|_{H^1(B_1^+)} + \| \widetilde{w}\|_{H^1(B_1^+)}
    \right )
\end{multline}
where $C>0$ only depends on $M_0$, $\xi_0$, $\sigma_0$, $\|\mathbb
P\|_{C^{0,1}(\overline{\Omega})}$ and
$\|S\|_{C^{0,1}(\overline{\Omega})}$. Taking the limit as $s
\rightarrow 0$ and by the definition of the function $\vartheta$,
we have
\begin{multline}
    \label{eq:reg-14-2}
    \left \| \frac{\partial }{\partial y_1} \nabla \widetilde{\varphi}  \right  \|_{L^2(B_{\rho}^+)}
    + \left \| \frac{\partial \widetilde{\varphi} }{\partial y_1} + L^T \frac{\partial }{\partial y_1} \nabla \widetilde{w}  \right \|_{L^2(B_{\rho}^+)} \leq
    \\
    \leq
    C \left (
    \| \widetilde{{\cal{Q}}}  \|_{H^{ \frac{1}{2}  }(\Gamma_1)} +
     \| \widetilde{{\cal{M}}}   \|_{H^{ \frac{1}{2}  }(\Gamma_1)} +
    \| \widetilde{\varphi}\|_{H^1(B_1^+)} + \| \widetilde{w}\|_{H^1(B_1^+)}
    \right )
\end{multline}
where $C>0$ only depends on $M_0$, $\xi_0$, $\sigma_0$, $\|\mathbb
P\|_{C^{0,1}(\overline{\Omega})}$ and
$\|S\|_{C^{0,1}(\overline{\Omega})}$. Therefore, the tangential
derivatives $\frac{\partial }{\partial y_1} \nabla
\widetilde{\varphi} $, $\frac{\partial }{\partial y_1} \nabla
\widetilde{w}$ exist and belong to $L^2(B_{\rho}^+)$.

\medskip

\textit{Second step.} (Estimate of the normal derivatives)

To obtain an analogous estimate of the normal derivatives
$\frac{\partial }{\partial y_2} \nabla \widetilde{\varphi} $,
$\frac{\partial }{\partial y_2} \nabla \widetilde{w}$ we need to
prove the following two facts:
\begin{equation}
    \label{eq:reg-15-1}
    \left | \int_{B^+_\rho} \frac{\partial \widetilde{\varphi}_r}{\partial
    y_2}\frac{\partial \widetilde{\psi}}{\partial y_2} \right | \leq C \|
    \widetilde{\psi}\|_{L^2(B^+_\rho)}, \quad \hbox{for every }
    \widetilde{\psi} \in C^\infty_0(B^+_\rho), \ r=1,2,
\end{equation}
\begin{equation}
    \label{eq:reg-15-2}
    \left | \int_{B^+_\rho} \frac{\partial \widetilde{w}}{\partial
    y_2}\frac{\partial \widetilde{v}}{\partial y_2} \right | \leq C \|
    \widetilde{v}\|_{L^2(B^+_\rho)}, \quad \hbox{for every }
    \widetilde{v} \in C^\infty_0(B^+_\rho),
\end{equation}
for some constant $C>0$ depending only on the data. Since
\begin{equation}
    \label{eq:reg-15-3}
    \widetilde{a}_+((\widetilde{\varphi}, \widetilde{w}),
    (\widetilde{\psi}, \widetilde{v}))= 0, \quad \hbox{for every }
    (\widetilde{\psi}, \widetilde{v}) \in  C^\infty_0(B^+_\rho,
    \R^2)\times C^\infty_0(B^+_\rho),
\end{equation}
by integration by parts we have
\begin{multline}
    \label{eq:reg-15-4}
    \int_{B_\rho^+} {\cal{P}}_{ir} \widetilde{\varphi}_{r,2}
    \widetilde{\psi}_{i,2} + \int_{B_\rho^+} {\cal{S}}_{22}
    \widetilde{w},_2 \widetilde{v},_2 = \int_{B_\rho^+}
    \sum^2_{\overset{i,j,r,s=1}{(j,s)\neq (2,2)}}
    (\widetilde{P}_{ijrs}
    \widetilde{\varphi}_{r,s}),_j \widetilde{\psi}_i -
    \\
    - \int_{B_\rho^+}
    \left (
    \widetilde{S} \widetilde{\varphi} \cdot \widetilde{\psi} -
    (\widetilde{S}_{ij} \widetilde{\varphi}_j (L^T)_{ik}),_k \widetilde{v}
    +
    \widetilde{S}(L^T \nabla \widetilde{w}) \cdot
    \widetilde{\psi} - \sum^2_{\overset{i,j=1}{(i,j)\neq (2,2)}}
    ((L\widetilde{S}L^T)_{ij} \widetilde{w},_j),_i \widetilde{v}
    \right ),
\end{multline}
for every $(\widetilde{\psi}, \widetilde{v}) \in
C^\infty_0(B^+_\rho,\R^2)\times C^\infty_0(B^+_\rho)$, where
\begin{equation}
    \label{eq:reg-16-1}
    {\cal{P}}_{ir} = \widetilde{P}_{i2r2}, \  i,r=1,2, \quad
    {\cal{S}}_{22}=(L\widetilde{S}L^T)_{22}.
\end{equation}
By the properties
\eqref{eq:reg-Ptilde-sym}-\eqref{eq:reg-Ptilde-strell} of
$\widetilde{\mathbb {P}}$ and the definite positiveness of
$\widetilde{S}$ (see \eqref{eq:reg-6-2}), the matrix
$({\cal{P}}_{ir})_{i,r=1,2}$ is symmetric and definite positive
and ${\cal{S}}_{22}>0$.

Let $\widetilde{v}=0$ in \eqref{eq:reg-15-4}. Then, by using
estimate \eqref{eq:reg-14-2} we have
\begin{multline}
    \label{eq:reg-16-2}
    \left | \int_{B_\rho^+} {\cal{P}}_{ir} \widetilde{\varphi}_{r,2}
    \widetilde{\psi}_{i,2} \right |
    \leq
    \\
    \leq
    C
    \left (
    \| \widetilde{{\cal{Q}}}\|_{H^{ \frac{1}{2}  }(\Gamma_1)} +
    \| \widetilde{{\cal{M}}}\|_{H^{ \frac{1}{2}  }(\Gamma_1)} +
    \| \widetilde{\varphi}\|_{H^1(B_1^+)} + \| \widetilde{w}\|_{H^1(B_1^+)}
    \right ) \| \widetilde{\psi}\|_{L^2(B_\rho^+)},
\end{multline}
for every $\widetilde{\psi} \in C^\infty_0(B_\rho^+)$, where the
constant $C>0$ only depends on $M_0$, $\xi_0$, $\sigma_0$,
$\|\mathbb P\|_{C^{0,1}(\overline{\Omega})}$ and
$\|S\|_{C^{0,1}(\overline{\Omega})}$. This inequality implies the
existence in $L^2(B_\rho^+)$ of the derivative $ \frac{\partial
}{\partial y_2} \left ( \sum_{r=1}^2 {\cal{P}}_{ir}
\widetilde{\varphi}_{r,2} \right )$, $i=1,2$. Then, it is easy to
see that this condition implies $ \frac{\partial^2
\widetilde{\varphi}_r}{\partial y_2^2} \in L^2(B_\rho^+)$,
$r=1,2$.

Similarly, choosing $\widetilde{\psi}=0$ in \eqref{eq:reg-15-4} we
have
\begin{multline}
    \label{eq:reg-16-3}
    \left | \int_{B_\rho^+} {\cal{S}}_{22}
    \widetilde{w},_2 \widetilde{v},_2 \right |
    \leq
    \\
    \leq
    C
    \left (
    \| \widetilde{{\cal{Q}}}\|_{H^{ \frac{1}{2}  }(\Gamma_1)} +
    \| \widetilde{{\cal{M}}}\|_{H^{ \frac{1}{2}  }(\Gamma_1)} +
    \| \widetilde{\varphi}\|_{H^1(B_1^+)} + \| \widetilde{w}\|_{H^1(B_1^+)}
    \right ) \| \widetilde{v}\|_{L^2(B_\rho^+)},
\end{multline}
for every $\widetilde{v} \in C^\infty_0(B^+_\rho)$, where the
constant $C>0$ only depends on $M_0$, $\xi_0$, $\sigma_0$,
$\|\mathbb P\|_{C^{0,1}(\overline{\Omega})}$ and
$\|S\|_{C^{0,1}(\overline{\Omega})}$. As before, this condition
implies the existence in $L^2(B_\rho^+)$ of $ \frac{\partial^2
\widetilde{w}}{\partial y_2^2}$.

Finally, {}from \eqref{eq:reg-16-2} and \eqref{eq:reg-16-3}, the
$L^2$-norm of $ \frac{\partial^2 \widetilde{\varphi}_r}{\partial
y_2^2}$, $r=1,2$, and $ \frac{\partial^2 \widetilde{w}}{\partial
y_2^2}$ can be estimated in terms of known quantities, and the
proof of Theorem \ref{theo:reg-loc-bound} is complete.

\bigskip
\bibliographystyle{plain}

\end{document}